\title{Structure of relatively bi-exact group von Neumann algebras}
\author{Changying Ding}
\address{Department of Mathematics, Vanderbilt University, 1326 Stevenson Center, Nashville, TN 37240, USA}
\email{changying.ding@vanderbilt.edu}
\author{Srivatsav Kunnawalkam Elayavalli}
\address{Institute of Pure and Applied Mathematics, 460 Portola Plaza, Los Angeles, CA 90095}
\email{srivatsav.kunnawalkam.elayavalli@vanderbilt.edu}
\newtheorem{thm}{Theorem}[section]
\newtheorem{prop}[thm]{Proposition}
\newtheorem{cor}[thm]{Corollary}
\newtheorem{lem}[thm]{Lemma}
\theoremstyle{definition}
\newtheorem{defn}[thm]{Definition}
\newtheorem{defn/lem}[thm]{Definition/Lemma}
\newtheorem{rem}[thm]{Remark}
\newtheorem{examp}[thm]{Example}
\newtheorem{problem}{Problem}[]
\newtheorem{conj}[problem]{Conjecture}
\newcommand{\C}{{\mathbb C}}
\newcommand{\bB}{{\mathbb B}}
\newcommand{\K}{{\mathbb K}}
\newcommand{\M}{{\mathbb M}}
\newcommand{\bS}{{\mathbb S}}
\newcommand{\Y}{{\mathbb Y}}
\newcommand{\X}{{\mathbb X}}
\newcommand{\PP}{{\mathcal P}}
\newcommand{\B}{{\mathbb B}}
\newcommand{\cH}{{\mathcal H}}
\newcommand{\cU}{{\mathcal U}}
\newcommand{\cZ}{{\mathcal Z}}
\newcommand{\Ad}{\operatorname{Ad}}
\newcommand{\id}{\operatorname{id}}
\newcommand{\ds}{{\sharp\kern-.5pt\sharp}}
\newcommand{\actson}{{\, \curvearrowright \,}}
\newcommand{\RN}[1]{%
  \textup{\uppercase\expandafter{\romannumeral#1}}%
}
\DeclareRobustCommand\frownotimes{\mathbin{\mathpalette\frown@otimes\relax}}
\newcommand{\frown@otimes}[2]{%
  \vbox{
    \ialign{##\cr
      \hidewidth$\m@th#1{}_\frown$\kern-\scriptspace\hidewidth\cr
      \noalign{\nointerlineskip\kern-1pt}
      $\m@th#1\otimes$\cr
    }%
  }%
}
\begin{document}
\begin{abstract}

Using computations in the bidual of $\mathbb{B}(L^2M)$ we develop a new technique at the von Neumann algebra level to upgrade relative proper proximality to full proper proximality.   
 This is used to structurally classify subalgebras of $L\Gamma$ where $\Gamma$ is an infinite group that is biexact relative to a finite family of subgroups $\{\Lambda_i\}_{i\in I}$ such that each $\Lambda_i$ is almost malnormal in $\Gamma$. This generalizes the result of \cite{DKEP21} which classifies subalgebras of von Neumann algebras of biexact groups.     By developing a combination  with techniques from Popa's deformation-rigidity theory  we obtain a new structural absorption theorem for free products and a generalized Kurosh type theorem in the setting of properly proximal von Neumann algebras.  
\end{abstract}

\maketitle

\section{Introduction }

Recently the authors and J. Peterson in \cite{DKEP21}  developed the theory of small at infinity compactifications  a la Ozawa (\cite{BO08}), in the setting of tracial von Neumann algebras. At the foundation of this work lies the theory of operator $M$-bimodules and the several natural topologies that arise in this setting (see \cite{EfRu88}, \cite{Ma97,Ma98,Ma05}). The small at infinity compactification is  a canonical strong operator bimodule (in the sense of Magajna \cite{Ma97}) containing the compact operators. By using the noncommutative Grothendieck inequality (similar to Ozawa in \cite{Oza10}) it was seen that this strong operator bimodule coincides with $\K^{\infty,1}(M)$,  the closure $\mathbb K^{\infty, 1}(M)$ of $\mathbb K(L^2M)$ with respect to the $\|\cdot\|_{\infty, 1}$-norm on $\mathbb B(L^2M)$ given by $\| T \|_{\infty, 1} = \sup_{x \in M, \| x \| \leq 1} \| T \hat{x} \|_1$. The small at infinity compactification of a tracial von Neumann algebra $M$ is then given by \[
\mathbb S(M) = \{ T \in \mathbb B(L^2M) \mid [T, JxJ] \in \mathbb K^{\infty, 1}(M), \ {\rm for\ all\ }x \in M \}. 
\]
It is easy to see that this operator $M$-system $\mathbb S(M)$ contains $M$ and   $\mathbb K(L^2M)$, and is an $M$-bimodule. The    advantage of the  strong operator bimodule perspective is that it to identify an operator $T\in \mathbb S(M)$ suffices to check that $[T, JxJ] \in \mathbb K^{\infty, 1}(M)$ for all $x$ in some weakly closed subset of $M$. This is what allows for the  passage between the group and the von Neumann algebra settings. Using this technology \cite{DKEP21} defined the notion of proper proximality for finite von Neumann algebras, extending the dynamical notion for groups \cite{BIP18}: A finite von Neumann algebra $(M,\tau)$ is  properly proximal if there does not exist an $M$-central state $\varphi$ on  $\mathbb S(M)$ such that $\varphi_{|M}=\tau$.  By identifying and studying this property in various examples, the authors of \cite{DKEP21} obtained applications to the structure theory of II$_1$-factors. The goal of the present paper is to add to the list of applications.  

The  machinery underlying the results in this paper  is built on  is the notion of an $M$-boundary piece developed in \cite{DKEP21}, as an analogue of the group theoretic notion introduced in \cite{BIP18}. The motivation for considering this notion is that it allows for one to exploit the dynamics that is available only on certain locations of the  Stone-Cech boundary of the group.    For a group $\Gamma$, a boundary piece is a closed left and right invariant subset of $\beta(\Gamma)\setminus \Gamma $, whereas in the von Neumann algebra setting, it is denoted by $\X$ typically and is a certain hereditary $C^*$-subalgebra of $\mathbb B(L^2M)$ containing the compact operators (see Section 3.1).  One then considers the small at infinity compactification relative to a boundary piece $\mathbb S_{\X}(M)$ where $\mathbb K^{\infty, 1}(M)$ is replaced with $\mathbb K_{\X}^{\infty, 1}(M)$, a suitable analogue for the boundary piece. Then one  can define the notion of  proper proximality relative to $\X$, demanding that there be no $M$-central state restricting to the trace on $\mathbb S_{\X}(M)$.  The main example we will be working with is a boundary piece generated by a finite family of von Neumann subalgebras $\{M_i\}_{i=1}^{n}$ (see Example \ref{examp:bdrysub}), which is adapted from the construction for a finite family of subgroups (see Example 3.3 in \cite{BIP18}).

 In \cite{DKE21}, the authors demonstrated an instance where   relative  proper proximality can be lifted to full proper proximality, i.e, when the boundary piece arises from subgroups that are almost malnormal \footnote{A subgroup $H<G$ is almost malnormal if for all $g\in G\setminus H$, $gHg^{-1}\cap H$ is finite.}  and not co-amenable (see Lemma 3.3 in \cite{DKE21}). The authors used this idea to classify proper proximality for wreath product groups. In this paper, we develop an analogue of this idea in the setting of von Neumann algebras (Theorem \ref{prop:bootstrap}). In both cases, one has to work in the bidual of the small at infinity compactification for technical reasons, and this brings about an extra layer of subtlety especially in the von Neumann setting. More specifically we show that one can map the basic construction into the bidual version of the relative small at infinity  compactification, provided the boundary piece arises from a mixing subalgebra. Composing with an appropriate state on this space, we get the link with relative amenability in the von Neumann setting. This upgrading theorem is the main new technical tool we develop in the present work:

\begin{thm}\label{prop:bootstrap}
Let $M$ be a diffuse finite von Neumann algebra, $M_i\subset M$, $i=1,\dots ,n$ diffuse von Neumann subalgebras
    such that the Jones projections $e_{M_i}$ pairwise commute, $M_i\subset M$ admits a {bounded} Pimsner-Popa basis (see Definition \ref{bpp}),
    and $A\subset pMp$ is a von Neumann subalgebra, for some $p\in \mathcal P(M)$.
Suppose that $A$ is properly proximal relative to $\X$ inside $M$, where $\X$ is the boundary piece associated with $\{M_i\}_{i=1}^n$,
    and $M_i\subset M$ is mixing for each $i=1,\dots, n$.
Then there exist projections $f_0\in \mathcal Z(A)$ and $f_i \in \cZ(A'\cap pMp)$, $1\leq i\leq n$,
    such that $Af_0$ is properly proximal and $Af_i$ is amenable relative to $M_i$ inside $M$ for each $1\leq i\leq n$,
    and $\sum_{i=0}^n f_i =p$.
\end{thm}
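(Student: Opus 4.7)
My plan is to prove the theorem by starting from a state that witnesses the failure of full proper proximality and decomposing it, using the boundary piece structure and the mixing hypothesis, into pieces that correspond to relative amenability.

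Let $f_0 \in \cZ(A)$ and $f_i \in \cZ(A' \cap pMp)$ be the maximal projections such that $Af_0$ is properly proximal and $Af_i$ is amenable relative to $M_i$ inside $M$, respectively. After a standard orthogonalization (using that all these projections pairwise commute since $f_i \in A' \cap pMp$ and $f_0 \in A$), set $q = p - f_0 - \sum_i f_i$ and argue by contradiction, assuming $q \neq 0$. Then on the corner $qMq$, the algebra $qAq$ is nowhere properly proximal, not amenable relative to any $M_i$, and still properly proximal relative to $\X$. By maximality of $f_0$ there is a $qAq$-central state $\varphi$ on $\bS(qMq)$ extending the trace.

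The central construction is, for each $i$, to produce an $M$-bimodular ucp map $\Phi_i : \langle M, e_{M_i}\rangle \to \bS_{\X}(M)^{**}$ that extends the inclusion of $M$. The bounded Pimsner--Popa basis is essential here: it gives a norm-bounded presentation of $e_{M_i}$ in terms of operators adapted to $\X$, which is what allows one to bimodularly embed the basic construction into the bidual of the relative small-at-infinity compactification. Extend $\varphi$ to a state $\tilde\varphi$ on $\bS_\X(qMq)^{**}$ (together with $qAq$-centralization via convex averaging). Composing gives a $qAq$-central state $\tilde\varphi\circ\Phi_i$ on $\langle qMq, e_{M_i}\rangle$ that restricts to the trace; by the basic-construction characterization of relative amenability, together with the failure of amenability relative to $M_i$ on $q$, this forces $\tilde\varphi(z_i) = 0$, where $z_i \in \bS_\X(qMq)^{**}$ is the central projection supporting the image of $\Phi_i$.

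The mixing hypothesis is used to establish that the $z_i$ are mutually orthogonal and that the complementary projection $1 - \sum_i z_i$ annihilates the boundary piece contribution; thus $\tilde\varphi$ effectively descends to a $qAq$-central state on $\bS_\X(qMq)$ extending the trace, which contradicts relative proper proximality of $A$. The main obstacle is the construction of the bimodular maps $\Phi_i$ together with the identification and orthogonality of the central supports $z_i$ in the bidual; this is where the ``bidual of $\B(L^2M)$'' computation advertised in the introduction lives, and it is where the commuting Jones projections, bounded Pimsner--Popa basis, and mixing hypotheses are all simultaneously exploited.
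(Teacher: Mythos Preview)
Your overall architecture matches the paper's: start from a state witnessing failure of full proper proximality, build an $M$-bimodular u.c.p.\ map out of the basic construction $\langle M, e_{M_i}\rangle$ using the bounded Pimsner--Popa basis, and compose to produce relative amenability. But several of the technical moves are misdirected in ways that would not close.

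First, the target of $\Phi_i$ is the wrong bidual. Mapping into $\bS_{\X}(M)^{**}$ loses control of normality on $M$; the paper works instead inside $(\B(L^2M)^\sharp_J)^*$, the von Neumann algebra of $M$-normal functionals, and more precisely lands $\phi$ in the corner $q_\K^\perp q_j\,\widetilde{\bS}(M)\,q_j$ (note: $\widetilde{\bS}(M)$, not $\widetilde{\bS}_\X(M)$). Relatedly, your state $\varphi$ lives on $\bS(qMq)$, and ``extending it to $\bS_\X(qMq)^{**}$ and then $qAq$-centralizing by convex averaging'' will generally destroy normality on $M$, which is exactly what you need to invoke the Ozawa--Popa characterization of relative amenability. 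The paper avoids this by pulling the state back along the conditional-expectation map $\tilde E:\widetilde\bS(M)\to\widetilde\bS(Af)$ of Lemma~\ref{lem:cond exp}, keeping everything inside the $M$-normal bidual.

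Second, your projections $z_i$ are not well defined as stated: if $\Phi_i$ is u.c.p.\ and \emph{extends the inclusion of $M$}, then $\Phi_i(1)=1$ and the central support of its image is $1$, so $\tilde\varphi(z_i)=1$ trivially. The correct projections are the identities $q_i$ of $(\K_{\X_i}(M)^\sharp_J)^*$, and the paper's map satisfies $\phi(1)=q_\K^\perp q_j$, not $1$. The crucial structural fact is not that the $q_i$ are orthogonal (they need not be), but that $\vee_i q_i = q_\X$ (Lemma~\ref{sup of projections}, which uses the commuting Jones projections), together with $\varphi(q_\K^\perp)=1$ and $\varphi(q_\X)=1$. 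Finally, mixing is not used to make the $z_i$ orthogonal; it is used to prove the key identity $q_\K^\perp e_j\,\iota_{\rm nor}(x)\,e_j = q_\K^\perp \iota_{\rm nor}(E_{M_j}(x))\,e_j$, which is what makes $\phi_0$ a $*$-homomorphism and what forces the cross terms in the Pimsner--Popa expansion to be compact. Your last step is also inverted: one does not ``descend'' to $\bS_\X$; rather, if $\varphi(q_\X^\perp)>0$ then $\varphi\circ\Ad(q_\X^\perp)$ restricts to $\widetilde\bS_\X(M)$ and contradicts relative proper proximality directly.
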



Using these ideas we  are interested in classifying  subalgebras of group von Neumann algebras arising from groups that are biexact relative to a family of subgroups (see e.g. \cite[Chapter 15]{BO08}). The first result of this kind was obtained in Theorem 7.2 of \cite{DKEP21} where it was shown that every subalgebra of the von Neumann algebra of a biexact group either has an amenable direct summand or is properly proximal. As essentially observed there, what relative biexactness buys us is the relative proper proximality for any subalgebra, relative to the boundary piece arising from the subgroups. Combining this with our upgrading result above,  we obtain our     main result below which is a structure theorem for  von Neumann subalgebras of group von Neumann algebras that are biexact relative to a family of subgroups where each subgroup is almost malnormal.

\begin{thm}\label{cor:biexact}
Let $\Gamma$ be a countable  group with a family of almost malnormal subgroups $\{\Lambda_i\}_{i=1}^n$.
If $\Gamma$ is biexact relative to  $\{\Lambda_i\}_{i=1}^n$,  
then for any von Neumann subalgebra $A\subset L\Gamma$, there exists $p\in \cZ{(A)}$ and projections $p_j\in \mathcal Z((Ap)'\cap pL(\Gamma)p)$ such that $\bigvee_{j=1}^{n} p_j=p$ and $Ap_i$ is amenable relative to $L\Lambda_i$ inside $L\Gamma$, for each $i=1,\dots,n$ and $Ap^\perp$ is properly proximal.
\end{thm}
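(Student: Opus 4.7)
The plan is to invoke Theorem~\ref{prop:bootstrap} with $M=L\Gamma$ and $M_i = L\Lambda_i$, after verifying its structural hypotheses on the inclusions and establishing the required proper proximality input. First I would check the three conditions on $L\Lambda_i \subset L\Gamma$. The Jones projection $e_{L\Lambda_i} \in \mathbb B(\ell^2(\Gamma))$ is the orthogonal projection onto $\ell^2(\Lambda_i)$, which is diagonal in the canonical basis $\{\delta_g\}_{g\in\Gamma}$, so these projections commute pairwise. A set of left coset representatives for $\Gamma/\Lambda_i$ yields an orthonormal Pimsner-Popa basis $\{u_{g_\alpha}\}$ of norm-one elements, providing a bounded Pimsner-Popa basis. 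The almost malnormality of $\Lambda_i$ in $\Gamma$ translates, by a standard matrix-coefficient argument, into mixingness of $L\Lambda_i \subset L\Gamma$.

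Next I would establish that every von Neumann subalgebra $A \subset L\Gamma$ is properly proximal relative to the boundary piece $\X$ associated with $\{L\Lambda_i\}$. This is the relative analogue of \cite[Theorem~7.2]{DKEP21}: biexactness of $\Gamma$ relative to $\{\Lambda_i\}$ amounts to topological amenability of the $\Gamma$-action on a suitable quotient of $\ell^\infty(\Gamma)$ by the ideals $c_0(\Gamma/\Lambda_i)$, and through the strong operator $M$-bimodule machinery of \cite{DKEP21} this promotes to a $\Gamma\times\Gamma$-equivariant ucp splitting obstructing the existence of any $A$-central state on $\mathbb S_\X(L\Gamma)$ that restricts to the trace on $A$, for an arbitrary subalgebra $A$. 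I expect this step to be the main technical hurdle: proper proximality is not generally inherited by subalgebras, so one must exploit the equivariance of the splitting rather than merely the relative proper proximality of $L\Gamma$ itself.

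With this in hand, Theorem~\ref{prop:bootstrap} produces pairwise orthogonal projections $f_0 \in \cZ(A)$ and $f_i \in \cZ(A' \cap L\Gamma)$ for $1 \le i \le n$ summing to $1$, with $Af_0$ properly proximal and each $Af_i$ amenable relative to $L\Lambda_i$. Setting $p := 1-f_0 \in \cZ(A)$, the projection $p$ lies in $A \cap A'$ and hence commutes with every element of $A' \cap L\Gamma$; a short commutation check then gives $(Ap)' \cap pL\Gamma p = (A' \cap L\Gamma)p$ and consequently $\cZ((Ap)'\cap pL\Gamma p) = \cZ(A' \cap L\Gamma)\cdot p$. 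Since each $f_i \le p$, taking $p_i := f_i$ yields projections in $\cZ((Ap)'\cap pL\Gamma p)$ with $\bigvee_{i=1}^{n} p_i = p$, and the desired properties that $Ap^\perp = Af_0$ is properly proximal and each $Ap_i$ is amenable relative to $L\Lambda_i$ follow directly.
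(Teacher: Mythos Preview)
Your proposal follows essentially the paper's approach: verify the structural hypotheses on the inclusions $L\Lambda_i \subset L\Gamma$ (commuting Jones projections, bounded Pimsner--Popa bases from coset representatives, mixingness from almost malnormality), establish relative proper proximality via biexactness, and then invoke Theorem~\ref{prop:bootstrap}. The paper records the middle step as Theorem~\ref{thm:biexact dichotomy}, and its one-line proof of Theorem~\ref{cor:biexact} simply cites these two results.

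There is, however, one overclaim in your second paragraph: it is not true that \emph{every} subalgebra $A\subset L\Gamma$ is properly proximal relative to $\X$ inside $L\Gamma$. If $A$ has an amenable direct summand $Aq$, then there is an $Aq$-central state on all of $\B(\ell^2\Gamma)\supset \bS_\X(L\Gamma)$ restricting normally to $L\Gamma$, so the hypothesis of Theorem~\ref{prop:bootstrap} fails for $A$. The argument you sketch---and the paper's Theorem~\ref{thm:biexact dichotomy}, which is exactly the relative analogue of \cite[Theorem~7.2]{DKEP21} you invoke---actually yields the dichotomy: either $A$ has an amenable direct summand, or $A$ is properly proximal relative to $\X$ inside $L\Gamma$. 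The fix is routine: pass to the maximal projection $q\in\cZ(A)$ with $Aq$ amenable, apply Theorem~\ref{prop:bootstrap} to $Aq^\perp\subset q^\perp L\Gamma q^\perp$, and then absorb $Aq$ (which is trivially amenable relative to each $L\Lambda_i$) into one of the $p_i$. Your final paragraph translating the output $f_0,f_1,\dots,f_n$ into the claimed $p$ and $p_i$ is correct as stated.
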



There are two natural instances where such a phenomenon (a countable group $\Gamma$ with a family of almost malnormal subgroups $\{\Lambda_i\}_{i=1}^n$
where $\Gamma$ is biexact relative to  $\{\Lambda_i\}_{i=1}^n$) is observed: First is in  the setting of free products, which we deal with in the present paper. Second is in  the setting of wreath products, which is investigated in a follow-up work by the first author \cite{Din22}. There is conjecturally a third setting of relative hyperbolicity, which we comment on in the end of the introduction.    

Thanks to Bass-Serre theory \cite{Ser80} we have a complete understanding of subgroups of a free product of groups. As a result, one can derive results of the following nature: If $H<G_1*G_2$ such that $|H\cap G_1|\geq 3$, then $H$ is amenable  only if $H<G_1$.
This phenomenon is referred to as amenable absorption.  Interestingly, the situation for von Neumann algebras is much more complicated. There is comparatively a very limited understanding of von Neumann subalgebras of free products. 
 Whether every self adjoint operator in any finite von Neumann algebra is contained in a copy of the hyperfinite II$_1$-factor  was itself an open problem for many years\footnote{This is a question of Kadison, Problem 7 from 'Problems on von Neumann algebras, Baton Rouge Conference' }. 
 Popa settled it in the negative in \cite{Popa1983MaximalIS} by discovering a surprising amenable absorption theorem for free product von Neumann algebras, thereby showing that  a generator masa in $L\mathbb{F}_2$ is maximally amenable. 

Popa's ideas been used to show maximal amenability in other situations (See for instance \cite{radialmasa}, \cite{wen}, \cite{SKC}, \cite{1c}). In the past decade there have been other new ideas that have been used to prove absorption theorems: Boutonnet-Carderi's approach \cite{BC15} relies on elementary computations in a crossed-product $C^*$-algebra; Boutonnet-Houdayer \cite{BH18} use the study of non normal conditional expectations; \cite{hayesetal} used a free probabilistic approach to study absorption. Ozawa in \cite{ozawanote} then gave a short proof of amenable absorption in tracial free products. There have also been a variety of important free product absorption results which are of a different flavor, and are structural in nature. See for example \cite{IPP08} and  \cite{ChHou}.     

  By applying our Theorem \ref{cor:biexact} in the setting of free products and using machinery from Popa's deformation-rigidity theory (specifically work of Ioana \cite{Io18}),
we obtain  a generalized structural absorption theorem below:

\begin{cor}\label{cor:inner amen absorption}
Let $(M_1,\tau_1)$ and $(M_2, \tau_2)$ be such that $M_i\cong L\Gamma_i$ where $\Gamma_i$ are countable exact groups and $M=M_1\ast M_2$ be the tracial free product.
Let $A\subset M$ be a von Neumann subalgebra with $A\cap M_1$ diffuse. 
If $A\subset M$  has no properly proximal direct summand, then $A\subset M_1$.
\end{cor}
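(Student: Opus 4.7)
The plan is to identify $M\cong L\Gamma$ with $\Gamma=\Gamma_1\ast\Gamma_2$ and then combine Theorem~\ref{cor:biexact} with Ioana's free-product rigidity from \cite{Io18}. The factor subgroups $\Gamma_i$ are malnormal in $\Gamma$ by elementary Bass-Serre theory, and the action of $\Gamma$ on its Bass-Serre tree (with trivial edge stabilizers and exact vertex stabilizers) shows, via Ozawa's framework \cite[Ch.~15]{BO08}, that $\Gamma$ is biexact relative to $\{\Gamma_1,\Gamma_2\}$. Hence Theorem~\ref{cor:biexact} applies to $A\subset L\Gamma$. Applying it, we obtain $p\in\cZ(A)$ with $Ap^\perp$ properly proximal and projections $p_1,p_2\in\cZ((Ap)'\cap pMp)$ with $p_1\vee p_2=p$ and $Ap_i$ amenable relative to $M_i$ inside $M$. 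Since $A$ has no properly proximal direct summand, $p^\perp=0$, so $p=1$; in particular $p_1,p_2\in A'\cap M$ with $p_1\vee p_2=1$.

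To exploit the diffuseness of $B:=A\cap M_1$, I would first recall the standard freeness fact $B'\cap M\subset M_1$ (a Fourier expansion against reduced words shows that no nonzero vector of $L^2(M)\ominus L^2(M_1)$ is invariant under a diffuse subalgebra of $M_1$). Hence $A'\cap M\subset M_1$, so $p_1,p_2\in M_1$. Moreover $Bp_i\subset Ap_i\cap p_iM_1p_i$ is diffuse whenever $p_i\neq 0$. I would then invoke Ioana's free-product absorption machinery from \cite{Io18}: for each $i$, $Ap_i\subset p_iMp_i$ is amenable relative to $M_i$, contains a diffuse subalgebra of $p_iM_1p_i$, and inherits from $A$ the absence of a properly proximal direct summand; the free-product intertwining dichotomy then forces $Ap_i\subset p_iM_1p_i\subset M_1$.

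Combining $Ap_1,Ap_2\subset M_1$ with $p_1\vee p_2=1$ via a short $L^2$-argument then gives $A\subset M_1$: for $a\in A$, set $y=a-E_{M_1}(a)$; bimodularity of $E_{M_1}$ gives $E_{M_1}(yp_i)=E_{M_1}(y)p_i=0$, while $yp_i=ap_i-E_{M_1}(a)p_i\in M_1$ by the previous paragraph, so $yp_i=0$ for $i=1,2$. Hence $y$ annihilates the ranges of $p_1$ and $p_2$ on $L^2(M)$, so annihilates the range of $p_1\vee p_2=1$, and $y=0$, i.e.\ $a\in M_1$.

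The main obstacle is the Ioana step, particularly the ``same-side'' case $i=1$: relative amenability of $Ap_1$ to $M_1$ alone is compatible with $Ap_1\not\subset M_1$ (for instance $M$ itself is amenable relative to $M_1$ whenever $M_2$ is amenable). Here the absence of a properly proximal direct summand, the diffuse witness $Bp_1\subset p_1M_1p_1$ sitting inside $Ap_1$, and the free-product deformation-rigidity of \cite{Io18} must be combined to preclude any ``free'' escape of $Ap_1$ from $p_1M_1p_1$. The ``mixed-side'' case $i=2$ is more routine and follows directly from the standard free-product absorption theorem.
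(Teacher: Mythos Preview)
Your overall plan is right, but you have the difficulty of the two cases reversed, and neither is actually argued. The $i=1$ case you call the ``main obstacle'' is in fact immediate from Boutonnet--Houdayer (Theorem~\ref{boutonnet houdayer theorem}): $Ap_1$ is amenable relative to $M_1$ and $Ap_1\cap M_1\supset Bp_1$ is diffuse, so $Ap_1\subset M_1$. Your proposed counterexample---that $M$ itself is amenable relative to $M_1$ whenever $M_2$ is amenable---is false for diffuse $M_1,M_2$: if it held, applying Theorem~\ref{boutonnet houdayer theorem} with $A=M$ would force $M\subset M_1$. No deformation-rigidity is needed on this side, and the ``no properly proximal summand'' hypothesis plays no further role once $p=1$.

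The $i=2$ case you call ``routine'' is where the actual work lies: you have $Ap_2$ amenable relative to $M_2$ but want $Ap_2\subset M_1$, and no single absorption theorem gives this. The paper's argument invokes Ioana's dichotomy (Lemma~\ref{Ioana}): relative amenability of $Af_2$ to $M_2$ forces either $Af_2\prec_M M_2$ or $Af_2$ amenable. The first alternative is ruled out via Popa's intertwining criterion using the diffuse $B\subset A\cap M_1$: pick trace-zero unitaries $u_n\in B$ with $u_n\to 0$ weakly; freeness gives $\|E_{M_2}(xu_nf_2y)\|_2\to 0$ for all $x,y\in M$. Hence $Af_2$ is amenable, so in particular amenable relative to $M_1$. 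Now all of $A=Af_1\oplus Af_2$ is amenable relative to $M_1$, and one application of Theorem~\ref{boutonnet houdayer theorem} to $A$ itself gives $A\subset M_1$---which also sidesteps the corner issues and the $p_1\vee p_2$ reassembly in your final paragraph.
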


\begin{rem}
    Using results of the upcoming work \cite{DP22}, one can relax the assumption on $M_i$, from being infinite group von Neumann algebras of exact groups, to just that they are  diffuse weakly exact von Neumann algebras.  We do not comment more on this at the moment because for the sake of examples, the above setting already provides many.     
\end{rem}

The authors of \cite{IsPeRu19} showed that there are examples of groups that are neither inner amenable nor properly proximal. All of these group von Neumann algebras fit into the setting of the above corollary.  
  Note that Vaes constructed in \cite{vaesinneramenable} plenty of groups that are inner amenable, yet their group von Neumann algebras lack Property (Gamma). Hence our results give a strict generalization of (Gamma) absorption (see Houdayer's Theorem 4.1 in \cite{houdayergammaabsorption} and see also Theorem A in \cite{hayesetal}) in these examples. 

\begin{rem}
The above result is false if one considers amalgamated free products. For instance, take $M_1=M_2=L\mathbb F_2\overline \otimes R$ and $A=(L\mathbb Z\ast L\mathbb Z)\overline\otimes R\subset M_1\ast_R M_2$, where two copies of $L\mathbb Z$ in $A$ are from $M_1$ and $M_2$, respectively. 
\end{rem}

\begin{rem}
Shortly before the posting of this paper, Drimbe announced a paper (see \cite{Drimbe1}) where he shows using Popa's deformation-rigidity theory that for any nonamenable inner amenable group $\Gamma$, if $L(\Gamma)\subset M_1*M_2$, then $L(\Gamma)$ intertwines into  $M_i$ for some $i=1,2$. This in particular generalizes  Corollary \ref{cor:inner amen absorption} in the case that $A\cong L\Gamma$ for some inner amenable group $\Gamma$, because he doesn't require  any assumptions for $M_i$.   
\end{rem}

Our techniques also reveal the following new Kurosh type structure theorem for free products in the setting of proper proximality, (partially generalizing Corollary 8.1 in \cite{Drimbe1}). See also \cite{Oza06, IPP08, Jesse, Houdayer_2016} for other important Kurosh type theorems.

\begin{cor}\label{Kurosh}
    Let $M=L\Gamma_1* \cdots *L\Gamma_m=  L\Lambda_1* \cdots *L\Lambda_n$, where all groups $\Gamma_i$ and $\Lambda_j$ 
	are countable exact nonamenable non-properly proximal i.c.c.\ groups. Then $m=n$ and  after a permutation of indices $L\Gamma_i$ is unitarily conjugate to $L\Lambda_i$.  
\end{cor}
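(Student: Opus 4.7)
The plan is to apply Theorem \ref{cor:biexact} to the inclusion $L\Gamma_i \subset M = L(\Lambda_1 * \cdots * \Lambda_n)$, upgrade the resulting relative amenability into a Popa intertwining via the free product structure, and then conclude by standard Kurosh-type bookkeeping. Note first that $\tilde\Lambda := \Lambda_1 * \cdots * \Lambda_n$ is exact, biexact relative to $\{\Lambda_j\}_{j=1}^n$, and each $\Lambda_j$ is almost malnormal in $\tilde\Lambda$, so Theorem \ref{cor:biexact} applies. Fixing $i$ and taking $A := L\Gamma_i$, the i.c.c. hypothesis gives $\cZ(A) = \C$, while the non-proper proximality of $\Gamma_i$ transfers to $A$ via \cite{DKEP21}. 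Thus the central projection $p$ produced by Theorem \ref{cor:biexact} must equal $1$, yielding projections $p_j \in \cZ(A' \cap M)$ with $\bigvee_j p_j = 1$ such that $Ap_j$ is amenable relative to $L\Lambda_j$ in $M$ for each $j$.

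The next step is to upgrade each nonzero $Ap_j$ to an intertwining $Ap_j \prec_M L\Lambda_j$. Writing $M = L\Lambda_j * N_j$ with $N_j := L(\ast_{k \neq j}\Lambda_k)$, I invoke the free product absorption principle from \cite{Io18}: a subalgebra of a tracial free product $B_1 * B_2$ amenable relative to $B_1$ is either amenable or intertwines into $B_1$. Since $A = L\Gamma_i$ is a nonamenable factor and $p_j \in A' \cap M$, each nonzero $Ap_j$ is nonamenable, hence $Ap_j \prec_M L\Lambda_j$. Standard intertwining manipulations using factoriality of $A$ then promote this to $A \prec_M L\Lambda_{\sigma(i)}$ for some $\sigma(i) \in \{1,\dots,n\}$. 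The symmetric argument yields $L\Lambda_j \prec_M L\Gamma_{\tau(j)}$ for each $j$.

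Composing $L\Gamma_i \prec L\Lambda_{\sigma(i)} \prec L\Gamma_{\tau(\sigma(i))}$ and comparing traces of the factors involved forces $\tau(\sigma(i)) = i$, and symmetrically $\sigma\circ\tau = \mathrm{id}$, so $m = n$ and $\sigma$ is a bijection. The mutual intertwining of the factors $L\Gamma_i$ and $L\Lambda_{\sigma(i)}$ in $M$ then upgrades to a genuine unitary conjugacy via Popa's intertwining theorem. The main obstacle I anticipate is Step 2: the projections $p_j$ lie in $A' \cap M$ but not necessarily in $\cZ(M)$, so the cut-down $p_j M p_j$ is not obviously a tracial free product, and some care is needed to apply \cite{Io18} to the corners; once this technical point is resolved, the remaining bookkeeping reduces to standard Popa-intertwining manipulations.
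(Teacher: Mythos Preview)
Your approach is essentially the paper's, and the obstacle you flag in Step 2 dissolves immediately once you note the fact the paper actually uses: since $L\Gamma_i$ sits as a diffuse free factor in $M = L\Gamma_i * (\ast_{k\neq i} L\Gamma_k)$, standard free-product malnormality gives $(L\Gamma_i)' \cap M = \cZ(L\Gamma_i) = \C$, so $\cZ(A'\cap M)=\C$ and every $p_j$ is $0$ or $1$. There are no corners to worry about, and Lemma~\ref{Ioana} applies directly to $A$ itself. The paper records precisely this observation and then invokes Theorem~\ref{thm:subalg in free product} (applied to the two-factor splitting $M = L\Lambda_1 * (L\Lambda_2*\cdots*L\Lambda_n)$) rather than Theorem~\ref{cor:biexact}, deferring all the remaining bookkeeping to \cite[Corollary~8.1]{Drimbe1}; your $n$-factor route via Theorem~\ref{cor:biexact} is equally valid and in fact lands directly on a single $L\Lambda_{\sigma(i)}$.

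One correction in your final paragraph: ``comparing traces'' is not the reason $\tau\circ\sigma=\mathrm{id}$. The correct argument is that $L\Gamma_i \not\prec_M L\Gamma_k$ for $i\neq k$ by freeness: pick trace-zero unitaries $u_n\in L\Gamma_i$ with $u_n\to 0$ weakly and verify $\|E_{L\Gamma_k}(x u_n y)\|_2\to 0$ for all $x,y\in M$. The upgrade from mutual intertwining to unitary conjugacy then follows the standard \cite{IPP08}-type argument, which is what the paper's reference to \cite{Drimbe1} is invoking.
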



We conclude by state the following folklore conjecture (also stated in \cite{Koichi}), which would provide another family of examples for applying Theorem \ref{cor:biexact}. Indeed the peripheral subgroups below are almost malnormal (see Theorem 1.4 in \cite{osin}). 

\begin{conj}[\cite{Koichi}]
If $G$ is exact and hyperbolic relative to a family of peripheral subgroups $\{H_i\}_{i=1}^{n}$, then $G$ is biexact relative to $\{H_i\}_{i=1}^{n}$.  
\end{conj}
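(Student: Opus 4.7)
The strategy I would pursue is to exhibit an amenable action of $G \times G$ on the compactification $\widetilde{\beta} G$ of $G$ corresponding to the $G \times G$-invariant ideal $c_{\{H_i\}}(G) \subset \ell^\infty(G)$ of functions that vanish off finite neighborhoods of the peripheral cosets $\bigcup_{g,i} gH_i$. Relative biexactness of $G$ with respect to $\{H_i\}$ is exactly amenability of this two-sided action on the boundary $\widetilde{\beta} G \setminus G$, so the task reduces to producing a $G \times G$-equivariant net of continuous $\mathrm{Prob}(G)$-valued maps on $\widetilde{\beta} G$ with the usual asymptotic invariance of topological amenability. The main geometric input would be the Groves-Manning cusped space $X$, obtained from a Cayley graph of $G$ by attaching a combinatorial horoball to each peripheral coset: by Groves-Manning, $X$ is Gromov hyperbolic, $G$ acts properly on $X$, the Gromov boundary $\partial X$ is canonically the Bowditch boundary $\partial_B G$ of $(G, \{H_i\})$, and the parabolic limit points are in bijection with $\bigsqcup_i G/H_i$. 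Thus $G \cup \partial_B G$ is a natural $G \times G$-equivariant quotient of $\widetilde{\beta} G$ that isolates exactly the peripheral strata encoded by $c_{\{H_i\}}$.

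First I would construct a sequence of continuous maps $\mu_n : G \cup \partial_B G \to \mathrm{Prob}(G)$ by an Adams-Ozawa-style barycenter construction inside $X$: for each boundary point $\xi$ take the uniform measure on a Gromov-hyperbolic ball of radius $n$ around a point of the geodesic ray towards $\xi$, then project back to $G$. At a conical limit point the ray stays in the Cayley portion of $X$ and $\mu_n(\xi)$ is an honest probability on $G$ with the standard asymptotic equivariance; at a parabolic fixed point associated to a coset $gH_i$ the ray enters the corresponding horoball and the measure concentrates along $gH_i$, so its pushforward under $G \to G/H_i$ is Dirac. This is precisely the relative amenability of $G \curvearrowright \partial_B G$ with isotropy in $\{H_i\}$. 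Combining this with the exactness of each $H_i$ (inherited from exactness of $G$), which produces amenable actions $H_i \curvearrowright \beta H_i$ fiberwise inside each horoball, via a composition-of-amenable-actions argument yields an amenable left $G$-action on the full $\widetilde{\beta} G$.

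To upgrade from left amenability to $G \times G$ amenability on the boundary, I would exploit hyperbolicity of $X$: for fixed $s \in G$ and a non-parabolic $\xi \in \partial_B G$, the quasigeodesic rays defining $s\xi$ and $\xi s^{-1}$ have bounded Hausdorff distance and hence the same endpoint, so left and right translations agree on the non-parabolic part of $\partial_B G$; at a parabolic point the two translations differ by an element of the peripheral subgroup, an error absorbed by the ideal $c_{\{H_i\}}$. This reduces two-sided amenability on $\widetilde{\beta} G \setminus G$ to the already-constructed one-sided case.

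The principal obstacle, in my view, is the case of nonamenable peripherals: the action $G \curvearrowright \partial_B G$ is only amenable in the relative sense, and gluing this base relative amenability to the internal exactness of each $H_i$ into a single continuous $G \times G$-equivariant system of ucp maps on $\widetilde{\beta} G$ requires a careful horoball-by-horoball partition of unity that is uniform across all horoball depths. Verifying that the resulting maps genuinely factor through the quotient $\ell^\infty(G)/c_{\{H_i\}}$ and respect the compatibility between the hyperbolic-geometric barycenter data on $X$ and the arbitrary exactness data on each $H_i$ is where the main technical work will lie; it is also the point at which almost malnormality of the peripherals (automatic in the relatively hyperbolic setting by Osin) is likely to be used to keep the horoball strata dynamically separated.
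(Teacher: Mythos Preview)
The statement you are attempting to prove is not proved in the paper: it is explicitly presented as an open conjecture (attributed to \cite{Koichi}), included only to indicate a prospective source of further examples for Theorem~\ref{cor:biexact}. There is therefore no proof in the paper to compare your proposal against.

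As for the proposal itself, it is a reasonable strategic outline rather than a proof. The Groves--Manning cusped space and the Bowditch boundary are indeed the natural geometric objects to work with, and the Adams--Ozawa barycenter method is the standard mechanism for producing topologically amenable actions on hyperbolic boundaries. You correctly identify the main obstruction: the action $G\curvearrowright \partial_B G$ is only \emph{relatively} amenable (the parabolic point stabilizers are the peripherals $H_i$, which need not be amenable), so one cannot simply quote amenability of the boundary action. The gluing step---combining the relative amenability on the base with exactness of each $H_i$ on the horoball fibers into a single topologically amenable $G\times G$-action on the correct quotient of $\beta G$---is exactly where all known attempts stall. Your sketch of this step (``a careful horoball-by-horoball partition of unity that is uniform across all horoball depths'') is a description of what is needed rather than a mechanism for achieving it; in particular, the uniformity in depth is genuinely problematic because the combinatorics of the horoballs degenerate as one goes deeper, and there is no evident reason why the exactness data on $H_i$ should match continuously with the hyperbolic barycenter data on the thick part. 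Until that gluing is made precise, the argument remains at the level of a plausible program for an open problem.
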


\subsection*{Acknowledgements} The authors thank J. Peterson for stimulating conversations and very helpful suggestions. The authors thank Ben Hayes  for reading our  very early draft and offering many comments.

\section{Preliminaries}

\subsection{The basic construction and Pimsner-Popa orthogonal bases}\label{PP}

Let $M$ be a finite von Neumann algebra and $Q\subset M$ be a von Neumann subalgebra. The  basic construction $\langle M,e_Q\rangle$ is defined as the von Neumann subalgebra of $\mathbb B(L^2M)$ generated by $M$ and the orthogonal projection $e_Q$ from $L^2(M)$ onto $L^2(Q)$. 
There is a semifinite faithful normal trace on $\langle M,e_Q\rangle$ satisfying $\text{Tr}(xe_Q y)=\tau(xy)$, for every $x,y\in M$. 

Let $N\subset M$ be a von Neumann subalgebra. Then a Pimsner-Popa basis  (see \cite{PPbasis}) of $M$ over $N$ is a family of elements denoted $M/N=\{m_j\}_{j\in J}\subset M$ such that \begin{enumerate}
    \item ${E}_{N}(m_j^*m_k)=\delta_{j,k}p_j$, where $p_j\in\mathcal P(N)$ is a projection.
    \item \label{PPbasisfact2}$L^2(M)=\bigoplus_{j\in J}m_jL^2(N)$ and every $x\in M$ has a unique decomposition $x=\sum_{j} m_j{E}_N(m_j^*x)$. 
\end{enumerate}

In the case that $N=L(\Lambda)$ and $M=L(\Gamma)$ where $\Lambda<\Gamma$, we can identify a  Pimnser-Popa basis in $M$ from a choice of coset-representatives i.e, $\Gamma = \bigsqcup_{k\geq 0} t_k\Lambda$,
	and $m_k:=\lambda_{t_k}\in \cU(L\Gamma)$: $M/N=\{u_j\}_{j\in J}$.


    

For technical reasons, we will need the existence of the following type of Pimsner-Popa basis for our results:

\begin{defn}\label{bpp}
    Say that an inclusion of separable finite von Neumann algebras $N\subset M$ admits a \emph{bounded} Pimnser-Popa basis in $M$ if there exists a Pimsner-Popa basis $\{m_k\}_{k\in \mathbb{N}}$ for the inclusion $N\subset M$ such that $\sup_{k\in \mathbb{N}}\|m_k\|<\infty$. 
\end{defn}

Note that if $M/N$ consists of unitaries in $M$, then it clearly also satisfies that it is a {bounded} Pimnser-Popa basis. This is a technical property considered by Ceccherini-Silberstein \cite{ceccherini}, called the U-property. It is a well known open problem if such bases always exist.   

Such a Pimnser-Popa basis satisfying the above U-property always exists for the inclusion $L\Lambda \subset L\Gamma$ where  $\Lambda<\Gamma$ is a subgroup of a countable group $\Gamma$.  




\subsection {Popa's intertwining-by-bimodules}

\begin{thm}[\cite{popainvent}]\label{popa intertwining} Let $(M,\tau)$ be a tracial von Neumann algebra and $P\subset pM p,Q\subset M$ be von Neumann subalgebras. 
Then the following  are equivalent:

\begin{enumerate}
\item There exist projections $p_0\in P, q_0\in Q$, a $*$-homomorphism $\theta:p_0P p_0\rightarrow q_0Q q_0$  and a non-zero partial isometry $v\in q_0M p_0$ such that $\theta(x)v=vx$, for all $x\in p_0P p_0$.


\item There is no sequence $u_n\in\mathcal U(P)$ satisfying $\|E_Q(x^*u_ny)\|_2\rightarrow 0$, for all $x,y\in pM$.
\end{enumerate}

\end{thm}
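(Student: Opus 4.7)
For $(1)\Rightarrow(2)$, I argue by contrapositive: suppose $(1)$ holds and, for contradiction, that there is a sequence $u_n\in\cU(P)$ with $\|E_Q(x^*u_n y)\|_2\to 0$ for all $x,y\in pM$. After a standard polar-decomposition reduction one may assume $v$ is a partial isometry with $v^*v=p_0$, $vv^*\in Q$, and $\theta$ unital. Testing the convergence hypothesis with $x=y=v^*$ (which lies in $pM$ since $p_0\le p$), the intertwining relation $\theta(p_0 u_n p_0)v=v(p_0u_np_0)$ together with $p_0v^*=v^*$ gives $vu_nv^*=\theta(p_0u_np_0)\,vv^*\in Q$, so $E_Q(vu_nv^*)=\theta(p_0u_np_0)\,vv^*$. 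A direct trace calculation, using $u_n^*u_n=p$ and $\theta(p_0)=q_0$, yields $\|E_Q(vu_nv^*)\|_2^2=\tau(vv^*)>0$ independently of $n$, contradicting the assumed convergence.

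For $(2)\Rightarrow(1)$, I again argue by contrapositive: assume no such sequence $u_n$ exists, and produce inside the basic construction $\langle M,e_Q\rangle$ (with its semifinite trace $\Tr$) a non-zero finite-$\Tr$-trace projection $f\in P'\cap p\langle M,e_Q\rangle p$. Such an $f$ is the orthogonal projection onto a $P$-$Q$-subbimodule $\cH\subset pL^2M$ that is finitely generated as a right Hilbert $Q$-module, from which $v$ and $\theta$ are extracted via a right-$Q$-frame and polar decomposition. To produce $f$: a standard $\eps$-argument turns the failure of $(2)$ into a finite set $F\subset pM$ and $\delta>0$ such that, with $A:=\sum_{y\in F}y\,e_Q\,y^*\in\langle M,e_Q\rangle_+$ (a positive element of finite $\Tr$-trace), one has $\Tr(uAu^*\cdot p)\ge\delta$ for every $u\in\cU(P)$. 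Let $\cK$ be the $\|\cdot\|_{2,\Tr}$-closed convex hull of $\{uAu^*:u\in\cU(P)\}$. The uniform lower bound shows $0\notin\cK$, and since $\cK$ is a bounded convex set in the Hilbert space $L^2(\langle M,e_Q\rangle,\Tr)$ it has a unique norm-minimizer $T_0$; $\cU(P)$-invariance of $\cK$ forces $T_0\in P'\cap p\langle M,e_Q\rangle p$, and $0\le T_0\ne 0$ with $\Tr(T_0)<\infty$. A spectral projection $f$ of $T_0$ onto $[\lambda,\|T_0\|]$ for $\lambda>0$ small enough then has the desired properties.

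The main obstacle is the averaging step in $(2)\Rightarrow(1)$: executing a Dixmier-type argument inside the semifinite (not finite) algebra $\langle M,e_Q\rangle$ and simultaneously keeping track of finite $\Tr$-trace and of the non-vanishing witnessed by the uniform lower bound is the delicate part; once $T_0$ is in hand, the spectral decomposition and the extraction of $(v,\theta)$ from a finite right $Q$-module are routine. The $(1)\Rightarrow(2)$ direction, by contrast, reduces after polar decomposition to the clean trace computation above.
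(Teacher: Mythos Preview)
The paper does not prove this statement; it is quoted as a preliminary from \cite{popainvent}, so there is no in-paper argument to compare against. Your outline is the standard one, and the $(2)\Rightarrow(1)$ averaging argument in $\langle M,e_Q\rangle$ is essentially correct apart from one slip: the lower bound you need is $\Tr(uAu^*\cdot A)\ge\delta$, not $\Tr(uAu^*\cdot p)\ge\delta$. The latter equals $\Tr(A)$ for every $u$ (so it encodes nothing from the hypothesis), and since in general $\Tr(p)=\infty$ the linear functional $\Tr(\,\cdot\,p)$ is not $\|\cdot\|_{2,\Tr}$-continuous and cannot be used to exclude $0$ from the closed convex hull $\cK$. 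With $A$ in place of $p$ everything goes through.

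There is a genuine gap in your $(1)\Rightarrow(2)$, however. The ``polar-decomposition reduction'' to $vv^*\in Q$ is not valid: one only gets $vv^*\in\theta(p_0Pp_0)'\cap q_0Mq_0$, and already for $M=M_2(\C)$, $Q=\C 1$, $p_0=e_{11}$, $\theta(\lambda e_{11})=\lambda$, no nonzero $v\in Me_{11}$ has $vv^*\in Q$. More importantly, even granting all your reductions, the claimed identity $\|E_Q(vu_nv^*)\|_2^2=\tau(vv^*)$ is false: from $vu_nv^*=\theta(p_0u_np_0)vv^*$ one obtains $\|vu_nv^*\|_2^2=\tau\big(vv^*\,\theta(p_0u_n^*p_0u_np_0)\,vv^*\big)$, and $p_0u_n^*p_0u_np_0=p_0$ only when $u_n$ normalizes $p_0$, which a unitary of $P$ need not do (take $P=M_2(\C)$, $p_0=e_{11}$, $u$ the flip; then $p_0up_0=0$). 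A single test pair therefore cannot work. The standard repair is to choose finitely many partial isometries $w_1,\dots,w_m\in P$ with $w_k^*w_k\le p_0$ and $\sum_k w_kw_k^*$ equal to the central support of $p_0$ in $P$; then $a:=\sum_k w_k(v^*e_Qv)w_k^*$ is a nonzero finite-$\Tr$-trace element of $P'\cap p\langle M,e_Q\rangle p$, so $\Tr(u_n a u_n^*\,a)=\Tr(a^2)>0$ for all $n$, while applying the hypothesis to the finite test set $\{w_kv^*\}_k\subset pM$ forces $\Tr(u_n a u_n^*\,a)\to 0$. So contrary to your closing assessment, it is this direction that hides the extra idea.
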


If one of these equivalent conditions holds,  we write $P\prec_{M}Q$, and say that \emph{a corner of $P$ embeds into $Q$ inside $M$.}

\subsection{Relative amenability}


Let $P\subset M $ and $Q \subset M$ be a von Neumann subalgebras.
We say that $P$ is  {\it amenable relative to $Q$ inside $M$} if there exists a sequence $\xi_n\in L^2(\langle M,e_Q\rangle)$ such that $\langle x\xi_n,\xi_n\rangle\rightarrow\tau(x)$, for every $x\in M $, and $\|y\xi_n-\xi_ny\|_2\rightarrow 0$, for every $y\in P$. By \cite{OzPo10a}, Theorem 2.1  $P$ is  amenable relative to $Q$ inside $M$ if and only if there exists a $P$-central state in the basic construction $\langle M,e_Q\rangle$ that is normal when restricted to $M$, and faithful on $\mathcal{Z}(P'\cap M)$. 


\subsection{Mixing subalgebras and free products of finite von Neumann algebras.}\label{sec:free products}


Let $M$ be a finite von Neumann algebra and $N\subset M$ a von Neumann subalgebra. Recall the inclusion $N\subset M$ is mixing if $L^2(M\ominus N)$ is mixing as an $N$-$N$ bimodule, i.e., for any sequence $u_n\in\mathcal U(N)$ converging to $0$ weakly, one has $\|E_N(xu_ny)\|_2\to 0$ for any $x,y\in M\ominus N$.
When $M$ and $N$ are both diffuse, we may replace sequence of unitaries with any sequence in $N$ converging to $0$ weakly \cite[Theorem 5.9]{DKEP21}.

\begin{rem}\label{rem:compact}
    Let $M$ be a diffuse finite von Neumann algebra and $N\subset M$ a diffuse von Neumann subalgebra.
    If $N\subset M$ is mixing, then it is easy to check that $e_N x JyJ e_N\in \B(L^2M)$ is a compact operator from $M$ to $L^2M$ assuming $x$ or $y\in M\ominus N$.
\end{rem}

Examples of mixing subalgebras include $M_1$ and $M_2\subset M_1\ast M_2$, where $M_1$ and $M_2$ are diffuse \cite[Proposition 1.6]{jolissaint}  and $L\Lambda\subset L\Gamma$, where $\Lambda<\Gamma$ is almost malnormal (see Proposition 2.4 in \cite{boutonnetcarderimaxamen}).  

The following \cite[Corollary 2.12]{Ioa15} is crucial to the proof of Theorem \ref{thm:subalg in free product}.  

\begin{lem}[Ioana]\label{Ioana}
Let $M_1$, $M_2$ be two diffuse tracial von Neumann algebras and $M=M_1*M_2$ be the tracial free product. 
Let  $A\subset M$ be a subalgebra such that $A$ is amenable relative to $M_1$ in $M$. Then either $A\prec_M M_1$  or $A$ is amenable.
\end{lem}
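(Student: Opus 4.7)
The plan is to apply Popa's deformation/rigidity theory using the Ioana--Peterson--Popa free malleable deformation naturally attached to the free product $M = M_1 * M_2$. Assume toward a contradiction that $A$ is amenable relative to $M_1$, yet $A \not\prec_M M_1$ and $A$ is not amenable. Embed $M$ inside $\tilde M := \tilde M_1 * \tilde M_2$ where $\tilde M_i := M_i * L\Z(u_i)$ and $u_i = \exp(ih_i)$ is a Haar unitary free from $M_i$, and consider the one-parameter family of trace-preserving automorphisms $(\theta_t)_{t\in\R}$ of $\tilde M$ determined by $\theta_t|_{M_i} = \Ad(u_i^t)$. Two features of this deformation will drive the argument: first, $\theta_t \to \id$ pointwise in $\|\cdot\|_2$ on $M$ as $t \to 0$; and second, Popa's transversality inequality
\[
\|\theta_{2t}(x) - E_M\theta_{2t}(x)\|_2 \;\geq\; \sqrt{2}\,\|\theta_t(x) - E_M\theta_t(x)\|_2, \qquad x \in M.
\]

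Next I would convert the relative amenability hypothesis into a vector statement inside $L^2(\tilde M)$-land. By assumption there is a net $\xi_n \in L^2\langle M, e_{M_1}\rangle$ that is approximately $A$-central and satisfies $\langle x\xi_n, \xi_n\rangle \to \tau(x)$ on $M$. Using the canonical $M$-bimodular identification $L^2\langle M, e_{M_1}\rangle \simeq L^2(M) \otimes_{M_1} L^2(M)$ together with the bimodule inclusion $L^2(M) \otimes_{M_1} L^2(M) \hookrightarrow L^2(\tilde M) \otimes_{M_1} L^2(\tilde M)$ induced by the free product structure, I would transport the $\xi_n$ to approximately $A$-central vectors $\eta_n$ on which the tensor-square deformation $\theta_t \otimes \theta_t$ acts. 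The standard deformation/rigidity dichotomy then applies. Either $\theta_t \to \id$ uniformly on the unit ball of $A$ as $t \to 0$, in which case the untwisting theorem for the IPP free malleable deformation forces $A \prec_M M_1$ or $A \prec_M M_2$; or the convergence fails uniformly, and transversality combined with the $\eta_n$ yields a nonzero $A$-central vector in the portion of $L^2(\tilde M) \otimes_{M_1} L^2(\tilde M)$ orthogonal to the diagonal copy of $L^2(M)$. By the freeness of $\tilde M_1$ and $\tilde M_2$ this latter bimodule is weakly contained in the coarse $A$-bimodule, so Connes' characterization of amenability gives $A$ amenable, contradicting our assumption.

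It remains to rule out the subcase $A \prec_M M_2$ coming from the first horn. After passing to a corner one may assume $A \subset pM_2p$ for some projection $p \in M_2$; since $A$ is still amenable relative to $M_1$ inside $M$ and $M_1$, $M_2$ are free and diffuse, the Jolissaint-type $M_1$-bimodule decomposition of $L^2(M)$ forces $A$ amenable outright, contradicting the non-amenability hypothesis. The main obstacle in executing this plan will lie in the second step together with the first horn of the dichotomy: realizing $L^2\langle M, e_{M_1}\rangle$ as an $A$-$A$ subbimodule of a bimodule on which the deformation acts in a controlled way, and pushing the transversality-based untwisting through in this relative setting. This is precisely the technical heart of Ioana's proof in \cite{Ioa15}, and it is where the freeness and diffuseness of both $M_i$ enter essentially.
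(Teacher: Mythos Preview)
The paper does not give its own proof of this lemma; it simply quotes \cite[Corollary~2.12]{Ioa15}. So there is no in-paper argument to compare against, and the question is whether your sketch reproduces Ioana's proof. The toolkit you invoke (the IPP free deformation $\theta_t$ on $\tilde M$, Popa's transversality, and the fact that $L^2(\tilde M)\ominus L^2(M)$ is a multiple of the coarse $M$-bimodule) is exactly what Ioana uses, so the strategy is in the right neighborhood.

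However, there is a genuine gap in how you organize the dichotomy. You run the uniform/non-uniform alternative on $\theta_t|_{(A)_1}$ and then, in the uniform horn, invoke IPP untwisting to get $A\prec_M M_1$ or $A\prec_M M_2$. This produces a spurious $M_2$ case which you then mishandle: $A\prec_M M_2$ does \emph{not} permit you to ``pass to a corner and assume $A\subset pM_2p$.'' Popa's intertwining only gives a (possibly non-injective) $*$-homomorphism $\theta\colon p_0Ap_0\to q_0M_2q_0$ and a nonzero partial isometry $v$ with $\theta(x)v=vx$; even if $\theta(p_0Ap_0)\subset M_2$ were shown amenable this would at best yield amenability of a quotient of a corner of $A$, not of $A$. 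Your second horn is also under-justified: non-uniform convergence of $\theta_t$ on $(A)_1$ is a statement about $A$ sitting in $L^2(M)$, and you do not explain the mechanism by which it interacts with the transported vectors $\eta_n$ to produce an $A$-central vector in the orthogonal piece. In Ioana's actual argument the deformation and transversality are applied to the almost-$A$-central vectors $\xi_n\in L^2\langle M,e_{M_1}\rangle$ themselves (suitably transported), and the dichotomy reads off directly as ``the $\xi_n$ stay concentrated, hence $A\prec_M M_1$'' versus ``they drift into a coarse piece, hence $A$ is amenable''; no $M_2$ alternative ever arises.
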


We also need the following case of the main result of \cite{BH18}: 

\begin{thm}[Boutonnet-Houdayer]\label{boutonnet houdayer theorem}

Let $M=M_1*M_2$, where $M_i$ are diffuse tracial von Neumann algebras. If $A\subset M$ is a von Neumann subalgebra that satisfies $A\cap M_1$ is diffuse and $A$ is amenable relative to $M_1$ inside $M$, then $A\subset M_1$.   
\end{thm}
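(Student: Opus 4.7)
The plan is to combine the dichotomy from Ioana's Lemma \ref{Ioana} with the mixing property of $M_1\subset M=M_1*M_2$ (which holds since $M_1$ and $M_2$ are diffuse, as recalled in Section \ref{sec:free products}) together with the diffuseness of $A\cap M_1$. Applying Lemma \ref{Ioana} to $A$, which is amenable relative to $M_1$, yields two possibilities: either $A\prec_M M_1$, or $A$ is amenable. I will handle each branch separately and in both cases deduce $A\subset M_1$.

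\textbf{Intertwining case.} Suppose $A\prec_M M_1$, with intertwining data $\theta:p_0Ap_0\to q_0M_1q_0$ and nonzero partial isometry $v\in q_0Mp_0$ satisfying $\theta(x)v=vx$ for all $x\in p_0Ap_0$. Decompose $v=v_0+v_\perp$ with $v_0\in L^2(M_1)$ and $v_\perp\in L^2(M\ominus M_1)$. Using the diffuseness of $A\cap M_1$, pick a sequence of unitaries $u_n\in \mathcal U(p_0(A\cap M_1)p_0)$ (a sufficiently small corner, chosen so that this algebra is diffuse) converging weakly to $0$. The identity $\theta(u_n)v=vu_n$ combined with the mixing estimate $\|E_{M_1}(x u_n y)\|_2\to 0$ for $x,y\in M\ominus M_1$ forces $v_\perp=0$. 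Hence the intertwining takes place inside $M_1$, and a standard normalizer bootstrap using the diffuse intersection $A\cap M_1$ upgrades this to $A\subset M_1$.

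\textbf{Amenable case.} Suppose $A$ is amenable. Relative amenability of $A$ over $M_1$ supplies an $A$-central state $\varphi$ on $\langle M,e_{M_1}\rangle$ whose restriction to $M$ is the trace. Fix $a\in A$ with $E_{M_1}(a)=0$; the goal is to show $a=0$. Pick $u_n\in \mathcal U(A\cap M_1)$ with $u_n\to 0$ weakly; since $u_n\in M_1$, it commutes with $e_{M_1}$. Applying $A$-centrality of $\varphi$,
\[
\varphi(e_{M_1}a^*ae_{M_1})=\varphi(u_n^*e_{M_1}a^*ae_{M_1}u_n)=\varphi(e_{M_1}(u_n^*a^*au_n)e_{M_1}).
\]
By Remark \ref{rem:compact}, the operator $e_{M_1}a^*JaJe_{M_1}$ (and hence, after rewriting, $e_{M_1}a^*ae_{M_1}$ conjugated by $u_n$) belongs to the compact operators on $L^2M$ because $a\in M\ominus M_1$, and the mixing property of $M_1\subset M$ forces the sequence $e_{M_1}(u_n^*a^*au_n)e_{M_1}$ to converge to $0$ in an appropriate operator topology detected by $\varphi$. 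Combined with the normality of $\varphi$ on $M$, this yields $\varphi(e_{M_1}a^*ae_{M_1})=0$, hence $\tau(E_{M_1}(a^*a))=0$, i.e.\ $a=0$. Therefore every $a\in A$ equals $E_{M_1}(a)\in M_1$, and $A\subset M_1$.

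The main obstacle I anticipate is in the amenable case: the state $\varphi$ lives at the bidual level, and making the ``compact operators decay in $\varphi$'' argument rigorous requires choosing a suitable ultraweak/weak$^*$ limit procedure so that the compactness from Remark \ref{rem:compact} interacts with the mixing decay $u_n\to 0$ in a way visible to the non-normal state $\varphi$. One natural remedy is to pass first to an $A''\cap M^\omega$-central state on an ultrapower of $\langle M,e_{M_1}\rangle$, where the weakly null sequence $u_n$ can be incorporated into a single unitary, and then invoke the mixing estimate once; this is the technical heart of the argument and the analogue in the free product setting of the non-normal conditional expectation machinery developed in \cite{BH18}.
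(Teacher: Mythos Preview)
The paper does not prove this statement; it is quoted as a known result from \cite{BH18}, so there is no proof in the paper to compare against.

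Your argument has genuine gaps in both branches. In the intertwining case, $A\prec_M M_1$ together with $A\cap M_1$ diffuse does not yield $A\subset M_1$ by any ``standard normalizer bootstrap'': mixing of $M_1\subset M$ forces the quasi-normalizer in $M$ of a diffuse $B\subset M_1$ to sit in $M_1$, but unitaries of $A$ need not normalize $A\cap M_1$, and even if the partial isometry $v$ were shown to lie in $M_1$ you would only have embedded a corner of $A$, not obtained $A\subset M_1$. In the amenable branch two steps fail. First, Remark~\ref{rem:compact} concerns operators of the form $e_{M_1}xJyJe_{M_1}$, not $e_{M_1}xye_{M_1}$; the operator $e_{M_1}u_n^*a^*au_ne_{M_1}$ acts on $L^2M_1$ as left multiplication by $u_n^*E_{M_1}(a^*a)u_n$, whose norm is independent of $n$, so there is no decay for $\varphi$ to detect. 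Second, even if you had $\varphi(e_{M_1}a^*ae_{M_1})=0$, this would not give $\tau(E_{M_1}(a^*a))=0$: the state $\varphi$ is only assumed normal on $M$, not equal to the canonical semifinite trace on $\langle M,e_{M_1}\rangle$, so you cannot conclude $a=0$. The proof in \cite{BH18} instead builds a (non-normal) conditional expectation $M\to M_1$ out of the relative-amenability state and analyzes it via the free-product mixing structure; splitting through Lemma~\ref{Ioana} does not shortcut this.
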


\section{Proper proximality for von Neumann algebras and  boundary pieces}

\subsection{Boundary pieces from von Neumann subalgebras}

Let $M$ be a finite von Neumann algebra. 
An $M$-boundary piece is a hereditary ${\rm C}^*$-subalgebra $\X \subset \B(L^2M)$ such that $\M(\X) \cap M$ and $\M(\X) \cap JMJ$ are weakly dense in $M$ and $JMJ$, respectively, where $\M(\mathbb X)$ is the multiplier algebra of $\mathbb X$. 
To avoid pathological examples, we will always assume that $\X \not= \{ 0 \}$, 
and it follows that $\K(L^2M) \subset \X$, by the assumption on $\M(\X)$.

The main example of an $M$-boundary piece we use in this paper is one generated by von Neumann subalgebras. 
We recall some facts about hereditary ${\rm C}^*$-algebras for what follows (see e.g. \cite[II.5]{Bl06}).

Let $A$ be a ${\rm C}^*$-algebra. 
There is a one-to-one correspondence between the set of hereditary ${\rm C}^*$-subalgebras of A 
	and the set of closed left ideals in $A$: 
	given a hereditary ${\rm C}^*$-subalgebra $H\subset A$ , $L_H :=AH=\{ah\ |\ a\in A,h\in H\}$ is a closed left ideal; 
	and for a closed left ideal $L \subset A$, $H_L = L \cap L^*$ is a hereditary ${\rm C}^*$-subalgebra of $A$. 
Given a subset of operators $\{b_i\}_{i\in I}\subset A$,
	the hereditary ${\rm C}^*$-subalgebra generated by $\{b_i\}_{i\in I}$ is 
	$BAB = \{bab\ | \ b\in B_+, a\in A\}$, where $B$ is the ${\rm C}^*$-subalgebra generated by $\{b_i\}_{i\in I}$.

 \begin{examp}\label{examp:bdrysub}[Boundary piece generated by subalgebras]
Let $M$ be a finite von Neumann algebra.
Suppose $M_i \subset M$, $i=1,\hdots, n$ are von Neumann subalgebras 
	and denote by $e_{M_i} \in \B(L^2M)$ the orthogonal projection from $L^2M$ onto the space $L^2M_i$. 
The $M$-boundary piece associated with the family of subalgebras $\{M_i\}_{i=1}^n$
is the hereditary ${\rm C}^*$-subalgebra of $\B(L^2M)$ generated by operators of the form $x JyJ e_{M_i}$ with $x, y \in M$,  $i=1,\dots,n$, 
	and it is clear that $M$ and $JMJ$ are contained in its multiplier algebra.
 \end{examp}
 
 \begin{lem}
 Let $M$ be a finite von Neumann algebra and $M_i\subset M$, $i=1,\dots, n$ von Neumann subalgebras such that the projections
    $\{e_{M_i}\}_{i=1}^n$ are pairwise commuting. 
Let $\X$ be the hereditary ${\rm C}^*$-subalgebra in $\bB(L^2M)$ generated by $\{xJyJ(\vee_{i=1}^{n} e_{M_i})   \ | \ x,y\in M\}$ 
and $\Y$ the hereditary ${\rm C}^*$-subalgebra in $\bB(L^2M)$ generated by $\{xJyJe_{M_i}   \ | \ i=1,\cdots n, \ x,y\in M\}$. Then $\X=\Y$.
 \end{lem}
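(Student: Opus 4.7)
The plan is to avoid working with generators directly and instead pass to the associated closed left ideals, where the computation becomes transparent.

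From the generalities on hereditary $\mathrm{C}^*$-subalgebras recalled above, a hereditary $\mathrm{C}^*$-subalgebra $H \subset \B(L^2M)$ determines and is determined by the closed left ideal $L_H := \B(L^2M)\, H$, via $H = L_H\cap L_H^*$. Thus $\X = \Y$ if and only if $L_\X = L_\Y$. Since multiplication on the left by $\B(L^2M)$ absorbs any $x\in M$ or $JyJ\in JMJ$ appearing in the generators, one sees that
\[
L_\X \;=\; \overline{\B(L^2M)\cdot E}, \qquad L_\Y \;=\; \overline{\textstyle\sum_{i=1}^n \B(L^2M)\cdot e_{M_i}},
\]
where $E := \vee_{i=1}^n e_{M_i}$.

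For $L_\Y\subset L_\X$: the pairwise-commutativity hypothesis implies $E$ commutes with each $e_{M_i}$, and since $e_{M_i}\le E$ one has $e_{M_i} E = e_{M_i}$; hence $a e_{M_i} = (a e_{M_i}) E \in \B(L^2M)\cdot E$ for every $a\in \B(L^2M)$. Summing over $i$ and passing to the closure gives $L_\Y\subset L_\X$.

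For $L_\X\subset L_\Y$, the same commutativity allows the inclusion-exclusion expansion
\[
E \;=\; 1-\prod_{i=1}^n (1-e_{M_i}) \;=\; \sum_{\emptyset\neq S\subset\{1,\dots,n\}} (-1)^{|S|+1}\prod_{i\in S} e_{M_i}.
\]
For each nonempty $S$, picking any $j\in S$, one writes $a\prod_{i\in S}e_{M_i} = \bigl(a\prod_{i\in S\setminus\{j\}} e_{M_i}\bigr)\cdot e_{M_j}\in \B(L^2M)\cdot e_{M_j}$. Thus $aE$ is a finite sum of such elements, so $\B(L^2M)\cdot E\subset \sum_j \B(L^2M)\cdot e_{M_j}$, and $L_\X\subset L_\Y$ after closure.

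Combining the two inclusions yields $L_\X=L_\Y$ and hence $\X = \Y$. The pairwise-commutativity hypothesis is used in exactly one place — to make sense of the inclusion-exclusion expansion of $E$ — and without it the formula $\prod_i(1-e_{M_i})$ is not even a well-defined projection. Everything else is formal manipulation, so no substantive obstacle arises; the only small piece of care needed is verifying that the closed left ideal generated by $\{xJyJE : x,y\in M\}$ really is $\overline{\B(L^2M)\cdot E}$ (and similarly for the $e_{M_i}$'s), which is the absorption observation above.
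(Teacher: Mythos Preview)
There is a gap in your identification of $L_\X$ (and symmetrically $L_\Y$). Under the hereditary/left-ideal correspondence, the hereditary ${\rm C}^*$-subalgebra generated by a set $S$ corresponds to the closed left ideal generated by $S\cup S^*$, not by $S$ alone: one needs $S\cup S^*\subset L\cap L^*$, hence $S^*\subset L$. Here $S=\{xJyJE\}$, so $S^*=\{EJy^*Jx^*\}$, and while your absorption argument correctly gives $\overline{\B(L^2M)\cdot S}=\overline{\B(L^2M)E}$, the adjoints contribute elements $T\cdot EJy^*Jx^*$ that do not lie in $\B(L^2M)E$ (this would force $EJy^*Jx^*(1-E)=0$, which fails for generic $x,y$). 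Thus $L_\X=\overline{\B(L^2M)E+\sum_{x,y}\B(L^2M)EJy^*Jx^*}$ is strictly larger than $\overline{\B(L^2M)E}$, and your displayed identification is false.

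The core algebra you isolated is nonetheless exactly what is needed, and the paper uses it directly rather than through left ideals. Your inclusion--exclusion shows that $E$ is a linear combination of the projections $\prod_{i\in S}e_{M_i}$; each of these is dominated by some $e_{M_j}\in\Y$, hence lies in $\Y$ by hereditariness, so $E\in\Y$. Likewise $e_{M_i}\le E\in\X$ gives $e_{M_i}\in\X$. Since $M,JMJ\subset\M(\X)\cap\M(\Y)$, one then gets $xJyJE\in\Y$ and $xJyJe_{M_i}\in\X$, whence $\X=\Y$. (If you prefer to stay with left ideals, the patch is to carry the extra $Jy^*Jx^*$-tails along: once $\B(L^2M)E=\overline{\sum_i\B(L^2M)e_{M_i}}$, right-multiplying by $Jy^*Jx^*$ matches the remaining pieces of $L_\X$ and $L_\Y$.)
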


\begin{proof}
First note that $e_{M_i}\in \X$ for each $i$ since $0\leq e_{M_i}\leq \vee_{i=1}^{n} e_{M_i}$. 
We also have $\vee_{i=1}^{n} e_{M_i}\in \Y$.
In fact, for each pair $i,j$, $e_{M_i}\wedge e_{M_j}\in \Y$ as $0\leq e_{M_i}\wedge e_{M_j}\leq e_{M_i}$, 
	and $e_{M_i}\vee e_{M_j=}e_{M_i}+e_{M_j}-e_{M_i}\wedge e_{M_j}\in \Y$
	as $[e_{M_i}, e_{M_j}]=0$.   
To see that $\X\subset \Y$, note that $L=\bB(L^2M)\X$ is contained in $K=\bB(L^2M)\Y$. 
Indeed, for any $x,y\in M$ and $T\in \bB(L^2M)$, we have $T(\vee_{i=1}^{n} e_{M_i})xJyJ\in \bB(L^2M)\Y xJyJ = \bB(L^2M)\Y$ as $M$ and $JMJ$ are in the multiplier algebra of $\Y$. 
By a similar argument we see that $\Y\subset \X$. 
\end{proof}

\begin{lem}\label{boundary piece equivalent defn approximate unit}
Under the above assumption,
	$\{\vee_{u,v\in F}uJvJ(\vee_{i=1}^ne_{M_i})Jv^*Ju^*\}_{F\in \mathcal{F}}$ is an approximate unit for $\X$,
	where $\mathcal{F}$ is the collection of finite subsets of $\mathcal{U}(M)$ ordered by inclusion.
\end{lem}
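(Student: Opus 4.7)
The strategy is to verify that each $p_F$ fixes the generators $xJyJe$ and their adjoints on the left for $F$ containing the relevant unitary decompositions, then extend to all of $\X$ by norm density and a uniform-boundedness argument.

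Set $e := \vee_{i=1}^n e_{M_i}$ and, for $u, v \in \mathcal{U}(M)$, $q_{u,v} := uJvJ\, e\, Jv^*Ju^*$; each $q_{u,v}$ is a projection (a unitary conjugate of $e$), so $p_F = \vee_{u,v \in F} q_{u,v}$ is a projection, and the net $\{p_F\}_{F \in \mathcal{F}}$ is increasing. The key identity is
\[
q_{u,v}\cdot (uJvJ\, e) \;=\; uJvJ\, e\, Jv^*Ju^* \cdot uJvJ\, e \;=\; uJvJ\, e,
\]
using $Jv^*Ju^* \cdot uJvJ = J(v^*v)J = 1$ together with $e^2 = e$. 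Since $q_{u,v} \le p_F$ whenever $u,v \in F$, this gives $p_F \cdot (uJvJ\, e) = uJvJ\, e$. Writing an arbitrary $x \in M$ as a finite linear combination of unitaries (e.g.\ by Russo--Dye), it follows that for any finite collection of generators $\{x_k J y_k J e\}$ there is a finite $F$ with $p_F \cdot (x_k J y_k J e) = x_k J y_k J e$ for each $k$. Taking $1 \in F$ also gives $e = q_{1,1} \le p_F$ and hence $p_F e = e$, so the analogous identity $p_F \cdot eJy^*Jx^* = eJy^*Jx^*$ holds for adjoint generators.

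Any word in the generators $\{xJyJe : x,y \in M\}$ and their adjoints has leftmost factor of one of these two forms, so for every $*$-polynomial $b_0$ in the generators there is a finite $F$ with $p_F b_0 = b_0$. Since such polynomials are norm-dense in $B := C^*\{xJyJe : x,y \in M\}$ and $\|p_F\| \le 1$, a standard $\varepsilon/3$ argument gives $\|p_F b - b\| \to 0$ for every $b \in B$. As $\X = \overline{B \cdot \B(L^2M) \cdot B}$, a typical element of the form $b_1 T b_2$ satisfies
\[
\|p_F (b_1 T b_2) - b_1 T b_2\| \;\le\; \|p_F b_1 - b_1\|\cdot \|T\| \cdot \|b_2\| \;\to\; 0,
\]
and by norm density $p_F a \to a$ for all $a \in \X$. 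The right-sided statement $a p_F \to a$ follows by taking adjoints.

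The main (mild) technical point is that $p_F$ need not literally belong to $\X$, since finite joins of projections in a hereditary C*-subalgebra need not themselves lie in the subalgebra; however, $p_F$ always lies in the weak closure of $\X$ (equivalently, its bidual), which is the correct ambient space for the approximate-unit computations performed in the bidual of $\B(L^2M)$ elsewhere in the paper.
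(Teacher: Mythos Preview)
Your argument mirrors the paper's: check that $p_F$ eventually fixes each generator $uJvJe$ on the left (and, via $1\in F$, each adjoint generator $eJv^*Ju^*$), then pass by density to all of $\X$. These are exactly the computations the paper carries out.

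The gap is your final paragraph. An approximate unit of $\X$ must by definition consist of elements of $\X$, and the paper does not concede this point: it argues that each $q_{u,v}=(uJvJe)(uJvJe)^*$ lies in the generating ${\rm C}^*$-algebra $B\subset \X$, so the finite sum $\sum_{u,v\in F}q_{u,v}$ is in $\X$, and then $0\le p_F\le \sum_{u,v\in F}q_{u,v}$ forces $p_F\in \X$ by hereditariness. You should replace your hedge with this argument rather than retreating to the bidual. (If you want to be careful, note that the inequality $\bigvee_i p_i\le \sum_i p_i$ is immediate for commuting projections but fails for general ones; the paper asserts it without further comment, so there is room for you to supply or question that step, but the intended route is hereditariness, not the bidual workaround.)
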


\begin{proof}
Set $e_0=\vee_{i=1}^ne_{M_i}$ and $e_F=\vee_{u,v\in F}uJvJ e_0 Jv^*Ju^*$ for any $F\in\mathcal F$.
First we observe that $e_F\in \X$ as $0\leq e_F\leq \sum_{u,v\in F}uJvJ e_0 Jv^*Ju^*$. 
Note that whenever $1\in F$, $e_0e_F=e_0$ and hence $(u_0Jv_0Je_0)e_F=u_0Jv_0Je_0$, for any $u_0,v_0\in \mathcal{U}(M)$. 
On the other hand, if $u_0,v_0\in F$, we have $e_F(u_0Jv_0Je_0)=e_F(u_0Jv_0Je_0Jv_0^*Ju_0^*)u_0Jv_0Je_0=u_0Jv_0Je_0$. The result follows by writing arbitrary $x,y\in M$ as sums of four unitaries.  
\end{proof}
 
Fix an $M$-boundary piece $\X$ and let $\K^L_\X(M)\subset \B(L^2M)$ denote 
	the $\|\cdot\|_{\infty,2}$ closure of the closed left ideal $\B(L^2M)\X$, i.e., 
	$\K^L_\X(M)=\overline{\B(L^2M)\X}^{\|\cdot \|_{\infty,2}}$,
	where $\|\cdot\|_{\infty,2}$ on $\B(L^2M)$ is given by $\|T\|_{\infty,2}=\sup_{x\in (M)_1}\|T\hat x\|_2$ for $T\in\B(L^2M)$.

We let $\K_\X(M)=(\K_\X^L(M))^*\cap (\K_\X^L(M))$, which is a hereditary C$^*$-subalgebra of $\B(L^2M)$ with $M$ and $JMJ$ contained in $\M(\K_\X^L(M))$  \cite[Section 3]{DKEP21}.
Denote by $\K_\X^{\infty,1}(M)$ the $\|\cdot\|_{\infty, 1}$ closure of $\K_\X(M)$ in $\B(L^2M)$, 
	$\|T\|_{\infty, 1}=\sup_{x,y\in (M)_1} \langle T\hat{x},\hat y\rangle$ for $T\in \B(L^2M)$
and it coincides with $\overline{\X}^{\|\cdot\|_{\infty,1}}$.

Now put $\bS_{\X}(M) \subset \B(L^2M)$ to be
\[
\bS_{\X}(M) = \{ T \in \B(L^2M) \mid [T, J x J] \in \K_\X^{\infty,1}(M) {\rm \ for \ all \ } x \in M \},
\]
which is an operator system that contains $M$. 
In the case when $\X = \K(L^2M)$, we write $\bS(M)$ instead of $\bS_{\K(L^2M)}(M)$. 

Recall from \cite[Theorem 6.2]{DKEP21} that for a finite von Neumann subalgebra $N\subset M$ and an $M$-boundary piece $\mathbb X$, 
	we say $N$ is properly proximal relative to $\X$ in $M$ if there is no $N$-central state $\varphi$ on $\bS_\X(M)$ that is normal on $M$.
And we say $M$ is properly proximal if $M$ is properly proximal relative to $\mathbb K(L^2M)$ in $M$.

\begin{rem}\label{remark}
Let $M$ and $Q$ be finite von Neumann algebras, $\X$ an $M$-boundary piece, 
and $N\subset pMp$ be a von Neumann subalgebra, where $0\neq p\in\mathcal P(M)$.
\begin{enumerate}
\item \label{rem:induced boundary piece} 
	Consider the u.c.p. map $\mathcal E_N :=\Ad(e_N)\circ \Ad(pJpJ):\B(L^2M)\to \B(L^2N)$.
 	Then by \cite[Remark 6.3]{DKEP21} that $\mathcal E_N(\K_\X(M))\subset \B(L^2N)$ forms an $N$-boundary piece.
	And we say $\mathcal E_N(\K_\X(M))$ is the induced $N$-boundary piece, which will be denoted by $\X^N$.
\item \label{rem:sum of prop prox} If $N$ is properly proximal relative to $\X$ inside $M$,
		then $zN$ is also properly proximal relative to $\X$ inside $M$ for any $0\neq z\in\mathcal Z(\mathcal P(N))$,
		since $\Ad(z)\circ \bS_\X(M)\subset \bS_\X(M)$.
\item \label{rem:rel prop prox implies no amen sum} If $N$ is properly proximal relative to $\X$ inside $M$, 
			then $N$ has no amenable direct summand.
		To see this, suppose $qN$ is amenable for some $0\neq q\in \cZ(\mathcal P(N)$ 
			and let $\varphi$ be a $qN$-central state on $\B(L^2(qN))$.
		Consider $\mu:=\varphi\circ \Ad(q) \circ \Ad(e_N):\B(L^2M)\to \C$,
			and one checks that $\mu$ is a $N$-central state with $\mu_{|M}$ being normal.
\item \label{rem:maximal prop prox sum} Notice that from the definition it follows that proper proximality is stable under taking direct sum.
		Thus we may take $f\in \cZ(\mathcal P(Q))$ so that $Qf$ is the maximal properly proximal direct summand of $Q$.
\end{enumerate}

\end{rem}

\subsection{Bidual formulation of proper proximality}\label{sec: bidual}
Given a finite von Neumann algebra $M$ and a C$^*$-subalgebra $A\subset \B(L^2M)$ such that $M$ and $JMJ$ are contained in $\M(A)$,
we recall that $A^{M \sharp M}$ (resp. $A^{JMJ\sharp JMJ})$ denotes the space of $\varphi \in A^*$ 
		such that for each $T \in A$ the map $M \times M \ni (a, b) \mapsto \varphi(aTb)$ 
		(resp. $JMJ\times JMJ \ni (a,b)\mapsto \varphi(aTb)$) is separately normal in each variable 
		and set $A^{\sharp}_J = A^{M \sharp M} \cap A^{JMJ \sharp JMJ}$.
Moreover, we may view $(A^\sharp_J)^*$ as a von Neumann algebra in the following way, as shown in \cite[Section 2]{DKEP21}.
Denote by $p_{\rm nor}\in \B(L^2M)^{**}$ the supremum of support projections of states in $\B(L^2M)^*$ 
	that restrict to normal states on $M$ and $JMJ$, 
	so that $M$ and $JMJ$ may be viewed as von Neumann subalgebras of $p_{\rm nor} \M(A)^{**} p_{\rm nor}$.
  Note that $p_{\rm nor}$ lies in $\M(A)^{**}$ and 
    $p_{\rm nor} \M(A)^{**} p_{\rm nor}$ is canonically identified with $(\M(A)^\sharp_J)^*$. 
Let $q_A\in\mathcal P(\M(A)^{**})$ be the central projection such that $q_A(\M(A)^{**})=A^{**}$ 
	and we may then identify $(A^\sharp_J)^*$ with $q_A p_{\rm nor} \M(A)^{**} p_{\rm nor}=p_{\rm nor} A^{**} p_{\rm nor}$,
    which is also a von Neumann algebra.
Furthermore, if $B\subset A$ is another C$^*$-subalgebra with $M$, $JMJ\subset \M(B)$, 
	we may identify $(B^\sharp_J)^*$ with $q_B p_{\rm nor} A^{**} p_{\rm nor} q_B$,
	which is a non-unital subalgebra of $(A^\sharp_J)^*$.

We will need the following bidual characterization of properly proximal. 

\begin{lem}{\cite[Lemma 8.5]{DKEP21}}\label{lem:bidual character}
Let $M$ be a separable tracial von Neumann algebra with an $M$-boundary piece $\mathbb X$.
Then $M$ is properly proximal relative to $\X$ if and only if there is no $M$-central state $\varphi$ on
\[
\widetilde{\bS}_{\X}(M) := \left\{ T \in \left(\B(L^2 M)_J^{{\sharp}} \right)^* \mid [T, a] \in \left( \K_\X(M)_J^\sharp \right)^* \ {\rm for \ all \ } a \in JMJ \right\}
\] 
such that $\varphi_{| M }$ is normal.
\end{lem}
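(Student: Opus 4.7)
The plan is to prove the equivalence by transferring $M$-central states (normal on $M$) between the two operator systems. The bridge is the natural $M$-bimodular u.c.p. map
\[
\iota \colon \bS_\X(M) \longrightarrow \widetilde{\bS}_\X(M), \qquad T \longmapsto p_{\rm nor}\, T\, p_{\rm nor},
\]
obtained from the embedding $\B(L^2M)\hookrightarrow \B(L^2M)^{**}$ together with the identification $(\B(L^2M)_J^\sharp)^* = p_{\rm nor}\B(L^2M)^{**}p_{\rm nor}$. The first thing to verify is that $\iota$ actually lands in $\widetilde{\bS}_\X(M)$, which reduces to the inclusion $\iota(\K_\X^{\infty,1}(M))\subset (\K_\X(M)_J^\sharp)^*$. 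This holds because the functionals $\omega_{x,y}\colon T\mapsto \langle T\hat x,\hat y\rangle$ with $x,y\in (M)_1$ (which define $\|\cdot\|_{\infty,1}$) all lie in $\B(L^2M)_J^\sharp$, so the $\|\cdot\|_{\infty,1}$-closure of $\K_\X(M)$ embeds into the $p_{\rm nor}$-corner of $\K_\X(M)^{**}$. Since $p_{\rm nor}$ commutes with both $M$ and $JMJ$ at the bidual level (being the support projection of states normal on both), we have $[\iota(T),a]=\iota([T,a])$ for $a\in JMJ$, confirming $\iota(\bS_\X(M))\subset\widetilde{\bS}_\X(M)$.

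One implication is then immediate. If $\varphi$ is an $M$-central state on $\widetilde{\bS}_\X(M)$ with $\varphi_{|M}$ normal, then $\psi := \varphi\circ \iota$ is $M$-central on $\bS_\X(M)$ with $\psi_{|M}$ normal, since $\iota$ is $M$-bimodular and $\iota|_M$ coincides with the canonical inclusion $M\hookrightarrow p_{\rm nor}M^{**}p_{\rm nor}$. Thus non-proper-proximality relative to $\X$ in the bidual sense forces non-proper-proximality relative to $\X$ in the original sense.

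For the converse, starting from an $M$-central state $\psi$ on $\bS_\X(M)$ normal on $M$, I would Hahn--Banach-extend $\psi$ to a state $\tilde\psi$ on $\B(L^2M)$, then average $\tilde\psi$ over $\mathcal U(JMJ)$ under the conjugation action. Using that $\psi$ is $M$-central on $\bS_\X(M)$ together with the symmetry $J\mathcal U(M)J=\mathcal U(JMJ)$, this averaging preserves the values of $\tilde\psi$ on $\bS_\X(M)$ (so $\tilde\psi$ still extends $\psi$) while producing a functional separately normal on $JMJ$; separate normality on $M$ persists because $\psi_{|M}$ is already normal. The averaged state then lies in $\B(L^2M)_J^\sharp$, and by duality defines a state $\varphi$ on $(\B(L^2M)_J^\sharp)^*$ whose restriction to $\widetilde{\bS}_\X(M)$ is $M$-central and normal on $M$.

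The main obstacle is this converse direction, specifically the averaging step: one must ensure that the average remains a state (positive, norm-one), genuinely lies in the predual $\B(L^2M)_J^\sharp$ (not merely a w$^*$-limit thereof), and that the resulting $\varphi$ annihilates the appropriate $(\K_\X(M)_J^\sharp)^*$-piece so that $M$-centrality descends correctly to $\widetilde{\bS}_\X(M)$. This last compatibility is ultimately a consequence of the hereditary structure of $\K_\X(M)\subset\B(L^2M)$ combined with the canonical identification $q_{\K_\X(M)}p_{\rm nor}\B(L^2M)^{**}p_{\rm nor}q_{\K_\X(M)}\cong(\K_\X(M)_J^\sharp)^*$ recorded in Section~\ref{sec: bidual}.
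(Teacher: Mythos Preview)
First, note that the paper does not supply a proof of this lemma at all: it is quoted from \cite[Lemma~8.5]{DKEP21} and used as a black box, so there is no in-paper argument to compare against.

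On the substance of your proposal: the easy direction (pull back a state on $\widetilde{\bS}_\X(M)$ to one on $\bS_\X(M)$ via $\iota_{\rm nor}$) is correct. The converse, however, has a genuine gap. Averaging a state over conjugation by $\mathcal U(JMJ)$ produces a $JMJ$-\emph{invariant} state, not a $JMJ$-\emph{normal} one; these are unrelated conditions, and invariance under $\Ad(JuJ)$ says nothing about weak$^*$-continuity of $a\mapsto\tilde\psi(aTb)$ on $JMJ$. So the sentence ``while producing a functional separately normal on $JMJ$'' is simply false as stated.

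There is a second problem one step earlier. You claim the averaging preserves the values of $\tilde\psi$ on $\bS_\X(M)$, which would require $\tilde\psi\big([JuJ,T]Ju^*J\big)=0$ for all $T\in\bS_\X(M)$ and $u\in\mathcal U(M)$. These commutators lie in $\K_\X^{\infty,1}(M)$, but nothing in your hypotheses forces $\psi$ (or its Hahn--Banach extension) to vanish there: $M$-centrality plus normality on $M$ do not imply $\psi_{|\K_\X^{\infty,1}(M)}=0$. Thus the averaging neither lands you in $\B(L^2M)_J^\sharp$ nor preserves the original state, and a different mechanism is required; the proof in \cite{DKEP21} does not proceed by averaging.
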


Using the above notations, we observe that we may identify $\tilde{\bS}_{\X}(M)$ in the following way:
\[\begin{aligned}
    \widetilde{\bS}_\X(M) &= \{ T\in \big(\bB(L^2 M )_{J}^\sharp\big)^* | \ [T,a]\in \big(\K_\X( M )_{J}^\sharp\big)^*, {\rm\ for\ any}\ a\in JMJ\}\\
    &= \{ T\in p_{\rm nor}\bB(L^2M)^{**}p_{\rm nor}\ | \ [T,a]\in q_{\X}p_{\rm nor}\big(M(\K_\X(M))\big)^{**}p_{\rm nor}q_\X , {\rm\ for\ any}\ a\in JMJ\},
\end{aligned}\]
where $q_{\X}$ is the identity of $\K_\X(M)^{**}\subset \big(M(\K_\X(M))\big)^{**}$.
If we set $q_\K=q_{\K(L^2M)}$ to be the identity of $\K(L^2M)^{**}\subset \B(L^2M)^{**}$,
	then using the above description of $\tilde \bS_\X(M)$, 
	we have  $q_{\X}^{\perp} \widetilde{\bS}_{\X}(M)q_{\X}^{\perp}\subset q_{\K}^\perp \widetilde{\bS}(M)$,
	as $q_\X$ commutes with $JMJ$.

\begin{rem}\label{rem:bidual notation}
Recall that we may embed $\B(L^2M)$ into $(\B(L^2M)^\sharp_J)^*$ through the u.c.p.\ map $\iota_{\rm nor}$,
	which is given by $\iota_{\rm nor}=\Ad(p_{\rm nor})\circ \iota$,
	where $\iota:\B(L^2M)\to \B(L^2M)^{**}$ is the canonical $*$-homomorphism into the universal envelope,
	and $p_{\rm nor}$ is the projection in $\B(L^2M)^{**}$ such that $p_{\rm nor} \B(L^2M)^{**} p_{\rm nor} = (\B(L^2M)^\sharp_J)^*$.
Restricting $\iota_{\rm nor}$ to ${\rm C}^*$-subalgebra $A\subset \B(L^2M)$ satisfying $M, JMJ\subset \M(A)$
	give rise to the embedding of $A$ into $(A^{\sharp}_J)^*$,
	and $(\iota_{\rm nor})_{\mid M}$, $(\iota_{\rm nor})_{\mid JMJ}$ are faithful normal representations of $M$ and $JMJ$, respectively.
Furthermore, although $\iota_{\rm nor}$ is not a $*$-homomorphism,  
	${\rm sp}M e_B M$ (that is, the span of elements $xe_By$ where $x,y\in M$) is in the multiplicative domain of $\phi_0$.
\end{rem}
\begin{lem}\label{lem: approx unit}
Let $M$ be a finite von Neumann algebra and $\X$ an $M$-boundary piece.
Let $\X_0\subset \K_\X(M)$ be a ${\rm C}^*$-subalgebra and $\{e_n\}_{n\in I}$ an approximate unit of $\X_0$.
If $\X_0\subset \K_\X^{\infty,1}(M)$ is dense in $\|\cdot\|_{\infty,1}$ 
	and $\iota(e_n)$ commutes with $p_{\rm nor}$ for each $n\in I$,
	then $\lim_n \iota_{\rm nor}( e_n)\in (\K_\X(M)^\sharp_J)^*$ is the identity, where the limit is in the weak$^*$ topology.
\end{lem}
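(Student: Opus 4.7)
The plan is to extract any weak$^*$ cluster point $T$ of the bounded net $\{\iota_{\rm nor}(e_n)\}$ in the von Neumann algebra $(\K_\X(M)_J^\sharp)^*$ (which exists by Banach--Alaoglu since $\|e_n\|\le 1$), show $T$ must equal the identity $q_\X p_{\rm nor}$, and then conclude by uniqueness of the limit.

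The argument rests on three ingredients. \emph{First}, the commutation assumption $[\iota(e_n), p_{\rm nor}]=0$ upgrades $\iota_{\rm nor}$ to a multiplicative map on products involving $e_n$: for every $a\in \K_\X(M)$,
\[
\iota_{\rm nor}(e_n a) = p_{\rm nor}\iota(e_n)\iota(a)p_{\rm nor} = \iota(e_n)p_{\rm nor}\iota(a)p_{\rm nor} = \iota_{\rm nor}(e_n)\iota_{\rm nor}(a),
\]
and similarly on the right. \emph{Second}, I would upgrade the approximate-unit property from $\X_0$ to $\|\cdot\|_{\infty,1}$-convergence on the whole of $\K_\X^{\infty,1}(M)$, namely $\|e_n a - a\|_{\infty,1}\to 0$ for every $a\in \K_\X^{\infty,1}(M)$, and in particular for $a\in \K_\X(M)$. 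For $a\in \X_0$ this follows from operator-norm convergence together with $\|\cdot\|_{\infty,1}\le \|\cdot\|_{op}$; for general $a$ one uses the $\|\cdot\|_{\infty,1}$-density of $\X_0$ in $\K_\X^{\infty,1}(M)$ plus the uniform bound $\|e_n T\|_{\infty,1}\le \|e_n\|\,\|T\|_{\infty,1}\le \|T\|_{\infty,1}$ to control the cross terms. \emph{Third}, every $\phi\in \K_\X(M)_J^\sharp$ is $\|\cdot\|_{\infty,1}$-continuous on $\K_\X(M)$, via the noncommutative Grothendieck input underlying the construction of $\K_\X^{\infty,1}(M)$; together with the second ingredient this yields $\phi(e_n a)\to \phi(a)$ for every $a\in \K_\X(M)$ and $\phi\in \K_\X(M)_J^\sharp$.

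Putting these together: fix $a\in \K_\X(M)$ and $\phi\in \K_\X(M)_J^\sharp$, and note that $S\mapsto \phi(S\iota_{\rm nor}(a))$ is $\sigma$-weakly continuous on the von Neumann algebra, hence lies in the predual. Weak$^*$ convergence of $\iota_{\rm nor}(e_n)$ to $T$ then gives
\[
\phi(T\iota_{\rm nor}(a)) = \lim_n \phi(\iota_{\rm nor}(e_n)\iota_{\rm nor}(a)) = \lim_n \phi(\iota_{\rm nor}(e_n a)) = \lim_n \phi(e_n a) = \phi(a) = \phi(\iota_{\rm nor}(a)),
\]
so $T\iota_{\rm nor}(a) = \iota_{\rm nor}(a)$ for every $a\in \K_\X(M)$. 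A Hahn--Banach argument shows $\iota_{\rm nor}(\K_\X(M))$ is weak$^*$-dense in $(\K_\X(M)_J^\sharp)^*$ (any predual element annihilating this image annihilates $\K_\X(M)$ and is therefore zero), and left multiplication by $T$ is weak$^*$-continuous, so $TS=S$ for every $S$ in the von Neumann algebra; hence $T=q_\X p_{\rm nor}$. Since every cluster point of the net equals the identity, the full net converges weak$^*$ to the identity.

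The main obstacle I anticipate is the third ingredient, namely the $\|\cdot\|_{\infty,1}$-continuity of $\sharp$-dual functionals through noncommutative Grothendieck; once that is in hand the rest of the argument is a routine approximate-unit and bidual manipulation.
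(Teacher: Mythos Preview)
Your proposal is correct and follows essentially the same strategy as the paper: take a weak$^*$ cluster point, use the commutation $[\iota(e_n),p_{\rm nor}]=0$ to get $\iota_{\rm nor}(e_n a)=\iota_{\rm nor}(e_n)\iota_{\rm nor}(a)$, invoke the $\|\cdot\|_{\infty,1}$-continuity of functionals in $\K_\X(M)^\sharp_J$ (this is \cite[Proposition 3.1]{DKEP21}, so your ``third ingredient'' is already available), and conclude via weak$^*$ density. The paper's version is slightly leaner: it checks $e\,\iota_{\rm nor}(T)=\iota_{\rm nor}(T)$ only for $T\in \X_0$, where $e_nT\to T$ in operator norm is immediate, and then appeals to the weak$^*$ density of $\iota_{\rm nor}(\X_0)$ itself---so your second ingredient (upgrading the approximate-unit property to all of $\K_\X^{\infty,1}(M)$) can be bypassed.
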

\begin{proof}
Since $\iota_{\rm nor}(\K_\X(M))\subset (\K_\X(M)^\sharp_J)^*$ is weak$^*$ dense and functionals in $\K_\X(M)^\sharp_J$ are continuous in $\|\cdot\|_{\infty,1}$ topology by \cite[Proposition 3.1]{DKEP21}, 
	we have $\iota_{\rm nor}(\X_0)\subset (\K_\X(M)^\sharp_J)^*$ is also weak$^*$ dense.
Let $e=\lim_n \iota_{\rm nor}(e_n)\in (\K_\X(M)^\sharp_J)^*$ be a weak$^*$ limit point and for any $T\in \X_0$, we have
$$e\iota_{\rm nor}(T)=\lim_n p_{\rm nor} \iota(e_n)\iota(T) p_{\rm nor} =\lim_n p_{\rm nor} \iota (e_nT) p_{\rm nor}=\iota_{\rm nor}(T),$$
and similarly $\iota_{\rm nor}(T) e= \iota_{\rm nor}(T)$.
By density of $\iota_{\rm nor}(\X_0)\subset (\K_\X(M)^\sharp_J)^*$, we conclude that $e$ is the identity in $(\K_\X(M)^\sharp_J)^*$.
\end{proof}

\begin{lem}\label{lem: projection commute}
Let $M$ be a finite von Neumann algebra and $N\subset M$ a von Neumann subalgebra.
Let $e_N\in \B(L^2M)$ be the orthogonal projection onto $L^2N$.
Then $\iota (e_N) \in \B(L^2M)^{**}$ commutes with $p_{\rm nor}$.
\end{lem}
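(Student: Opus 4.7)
The plan is to derive the commutation $[\iota(e_N), p_{\rm nor}] = 0$ by verifying that $e_N$ lies in the multiplicative domain of the u.c.p.\ map $\iota_{\rm nor} := \Ad(p_{\rm nor})\circ \iota \colon \B(L^2M) \to p_{\rm nor}\B(L^2M)^{**}p_{\rm nor}$. For a self-adjoint $T \in \B(L^2M)$ the multiplicative-domain condition $\iota_{\rm nor}(T^2) = \iota_{\rm nor}(T)^2$ is equivalent to $\iota(T)$ commuting with $p_{\rm nor}$; specialized to $T = e_N$, this becomes the assertion that $\iota_{\rm nor}(e_N)$ is a projection in $p_{\rm nor}\B(L^2M)^{**}p_{\rm nor}$.

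The key inputs I will rely on are: (i) $\iota(M)$ and $\iota(JMJ)$ commute with $p_{\rm nor}$, so that $M$ and $JMJ$ already lie in the multiplicative domain of $\iota_{\rm nor}$---this is how $p_{\rm nor}$ was built in Section 3.2, making the compressions $\iota_{\rm nor}|_M$ and $\iota_{\rm nor}|_{JMJ}$ faithful normal $*$-homomorphisms; (ii) the defining Jones identity $e_N a e_N = E_N(a)e_N$ for $a \in M$, together with $[e_N, N] = [e_N, JNJ] = 0$; and (iii) a Pimsner-Popa basis $\{m_k\}_{k\in\N}$ of $M$ over $N$ (which exists in the separable setting), giving the SOT expansion $\sum_k m_k e_N m_k^* = 1$.

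First, applying $\iota_{\rm nor}$ to $e_N a e_N = E_N(a)e_N$ and using that $E_N(a) \in N \subset M$ lies in the multiplicative domain gives $\iota_{\rm nor}(e_N a e_N) = \iota_{\rm nor}(E_N(a))\,\iota_{\rm nor}(e_N)$. Applying $\iota_{\rm nor}$ to the finite partial sums of the Pimsner-Popa decomposition and using that each $m_k \in M$ is in the multiplicative domain yields $\iota_{\rm nor}\bigl(\sum_{k=0}^K m_k e_N m_k^*\bigr) = \sum_{k=0}^K \iota_{\rm nor}(m_k)\,\iota_{\rm nor}(e_N)\,\iota_{\rm nor}(m_k)^*$. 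Comparing this $\sigma$-weakly with $\iota_{\rm nor}(1) = p_{\rm nor}$, and combining with the Jones identity above, forces $\iota_{\rm nor}(e_N)^2 = \iota_{\rm nor}(e_N)$; equivalently, $p_{\rm nor}\iota(e_N)(1-p_{\rm nor})\iota(e_N)p_{\rm nor} = 0$, which gives the desired commutation. As a sanity check, the normal part $z\iota(e_N)$ (where $z\in\B(L^2M)^{**}$ is the central projection identifying $z\B(L^2M)^{**}$ with $\B(L^2M)$) commutes with $p_{\rm nor}$ automatically, since $p_{\rm nor} \geq z$ (vector states are normal on $M$ and $JMJ$); the content of the lemma lies entirely in the singular component $(1-z)\iota(e_N)$.

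The principal obstacle is justifying the $\sigma$-weak convergence $\iota_{\rm nor}\bigl(\sum_{k=0}^K m_k e_N m_k^*\bigr) \to p_{\rm nor}$; since $\iota_{\rm nor}$ fails to be $\sigma$-weakly continuous on all of $\B(L^2M)$ (only on $M$ and $JMJ$), one has to test against functionals in $\B(L^2M)^\sharp_J$ and exploit their separate normality in each $M$-variable to validate the interchange of the limit with $\iota_{\rm nor}$. This is the delicate point where the structural hypotheses on $p_{\rm nor}$ via $\B(L^2M)^\sharp_J$ are indispensable.
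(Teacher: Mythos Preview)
Your reformulation via the multiplicative domain is correct: for self-adjoint $T$, having $\iota_{\rm nor}(T^2)=\iota_{\rm nor}(T)^2$ is equivalent to $[\iota(T),p_{\rm nor}]=0$. The problem is the step where you claim that the Pimsner--Popa expansion together with the Jones identity ``forces $\iota_{\rm nor}(e_N)^2=\iota_{\rm nor}(e_N)$''. The Jones identity gives you $\iota_{\rm nor}(e_N a e_N)=\iota_{\rm nor}(E_N(a))\iota_{\rm nor}(e_N)$, but this is \emph{not} the relation $\iota_{\rm nor}(e_N)\iota_{\rm nor}(a)\iota_{\rm nor}(e_N)=\iota_{\rm nor}(E_N(a))\iota_{\rm nor}(e_N)$; the two differ exactly by the term $p_{\rm nor}\iota(e_N)(1-p_{\rm nor})\iota(a)(1-p_{\rm nor})\iota(e_N)p_{\rm nor}$, which for $a=1$ is precisely the obstruction you are trying to kill. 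Likewise, even if you grant $\sum_k \iota_{\rm nor}(m_k)\iota_{\rm nor}(e_N)\iota_{\rm nor}(m_k^*)=p_{\rm nor}$, multiplying by $\iota_{\rm nor}(e_N)$ produces expressions like $\iota_{\rm nor}(e_N m_k)\iota_{\rm nor}(e_N)$ rather than $\iota_{\rm nor}(e_N m_k e_N)$, and you cannot collapse these without already knowing $e_N$ is in the multiplicative domain. So the argument is circular at the decisive point.

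The convergence $\iota_{\rm nor}(P_K)\to p_{\rm nor}$ is also genuinely problematic: functionals in $\B(L^2M)^\sharp_J$ are only separately normal in $M$ and $JMJ$ variables, and $P_K\nearrow 1$ is an SOT limit in $\B(L^2M)$, not a limit inside $M$ or $JMJ$, so there is no mechanism to pass this limit through $\iota_{\rm nor}$.

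The paper's proof avoids all of this by working directly with the range of $p_{\rm nor}$: one checks that for $\xi\in p_{\rm nor}\cH$ the vector state at $\iota(e_N)\xi$ is again normal on $M$ and $JMJ$, using only $e_N x e_N=E_N(x)e_N$ and the fact that vector functionals coming from $p_{\rm nor}\cH$ lie in $\B(L^2M)^\sharp_J$. This gives $\iota(e_N)p_{\rm nor}\cH\subset p_{\rm nor}\cH$, hence the commutation, with no Pimsner--Popa basis and no limit interchange needed.
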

\begin{proof}
Suppose $\B(L^2M)^{**}\subset \B(\cH)$ and notice that $\xi \cH$ is in the range of $p_{\rm nor}$ 
	if and only if $M\ni x\to \langle \iota(x)\xi ,\xi\rangle$ and $JMJ\ni x\to \langle \iota(x)\xi ,\xi\rangle$
	are normal.
For $\xi\in p_{\rm nor} \cH$, we have $\varphi(x):=\langle \iota(x) \iota(e_B)\xi, \iota(e_N) \xi\rangle=\langle \iota(E_N(x))\xi, \xi\rangle$
	is also normal for $x\in M$ and $JMJ$,
	which implies that $\iota(e_N) p_{\rm nor}=p_{\rm nor} \iota(e_N) p_{\rm nor}$.
It follows that $\iota(e_N)$ and $p_{\rm nor}$ commutes.
\end{proof}

\begin{lem}\label{lem: identity}
Let $N\subset M$ be a mixing von Neumann subalgebra admitting a Pimsner-Popa basis $\{m_k\}$ where $m_k\in M$. Let $\X_N$ be the associated boundary piece (see Example \ref{examp:bdrysub}), and $q_{\K}\in (\K(L^2M)_J^{\sharp})^*$, $q_{\X_N}\in (\K_{\X_N}(M)_J^{\sharp})^*$ be the respective identity elements. Then $$\sum_{k,l}q_{\K}^{\perp}\iota_{\rm nor}(m_kJm_l^*Je_NJm_lJm_k^*)=q_{\K}^{\perp}q_{\X_{N}}.$$
\end{lem}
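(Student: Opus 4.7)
My plan is to interpret $\sum_{k,l} q_\K^\perp \iota_{\rm nor}(T_{k,l})$, with $T_{k,l} := m_k Jm_l^*J\,e_N\,Jm_lJ\,m_k^*$, as the supremum in the bidual of the monotone net of partial sums $S_F := \sum_{(k,l)\in F} T_{k,l}$ over finite $F$, and to identify this supremum with $q_\K^\perp q_{\X_N}$ via two steps: (i) show the $T_{k,l}$'s become mutually orthogonal projections modulo $\K(L^2M)$; and (ii) show the partial sums form an approximate unit for $\K_{\X_N}(M)$ modulo compacts.

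For (i), the key computation is that $T_{k,l} T_{k',l'} \in \K(L^2M)$ whenever $(k,l)\neq(k',l')$. Rewriting $T_{k,l}=(Jm_l^*J)\,m_k e_N m_k^*\,(Jm_l J)$ via commutativity of $M$ and $JMJ$, the product has a core of the form $e_N(m_k^*m_{k'})J(m_l m_{l'}^*)Je_N$. After passing to a two-sided Pimsner-Popa basis satisfying both $E_N(m_k^* m_{k'}) = \delta_{kk'}p_k$ and $E_N(m_l m_{l'}^*) = \delta_{ll'}q_l$---which can be chosen by taking simultaneous left/right coset transversals in the group-subalgebra case (a Hall-type matching), and which is automatic for the unitary Pimsner-Popa bases available for $L\Lambda\subset L\Gamma$ as noted after Definition \ref{bpp}---the decomposition $m_k^* m_{k'} = \delta_{kk'}p_k + (m_k^* m_{k'} - \delta_{kk'}p_k)$ and the analogous one for $m_l m_{l'}^*$, combined with Remark \ref{rem:compact}, makes the core compact off the diagonal; the diagonal term vanishes when $(k,l)\neq(k',l')$. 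For a unitary basis, a direct computation also shows each $T_{k,l}$ is itself an honest projection in $\B(L^2M)$. Passing to the bidual via multiplicativity of $\iota_{\rm nor}$ on the span of $(M\cdot JMJ)\,e_N\,(M\cdot JMJ)$ (extending the multiplicative domain of Remark \ref{rem:bidual notation} beyond $\mathrm{sp}(Me_NM)$), the $q_\K^\perp \iota_{\rm nor}(T_{k,l})$'s are mutually orthogonal projections in $q_\K^\perp p_{\rm nor}\B(L^2M)^{**}p_{\rm nor}$, and their sum is a projection $S \leq q_\K^\perp q_{\X_N}$.

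For (ii), I would verify that for each generator $T = uJvJe_N$ of $\X_N$ (as in Lemma \ref{boundary piece equivalent defn approximate unit}), $S_F T - T \in \K(L^2M)$ once $F$ contains the essential Pimsner-Popa indices of $u$ and $v$. Using the expansions $u = \sum_k m_k E_N(m_k^*u)$ and $v = \sum_l m_l E_N(m_l^*v)$, together with the splittings $m_k^* u = E_N(m_k^*u) + (m_k^* u - E_N(m_k^*u))$ and likewise for $m_l v$, Remark \ref{rem:compact} eliminates the $M\ominus N$-components as compact, and the diagonal collapses back to $T$ via the Pimsner-Popa identities. This shows $S\cdot q_\K^\perp \iota_{\rm nor}(T) = q_\K^\perp \iota_{\rm nor}(T)$ for every generator $T$ of $\X_N$; since these generators are $\|\cdot\|_{\infty,1}$-dense in $\K_{\X_N}^{\infty,1}(M)$ and $\iota_{\rm nor}(e_F)\to q_{\X_N}$ by Lemma \ref{lem: approx unit} applied with $\X_0 = \X_N$ and the approximate unit $\{e_F\}$ of Lemma \ref{boundary piece equivalent defn approximate unit}, one gets $S \geq q_\K^\perp q_{\X_N}$, forcing equality. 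The main technical obstacle is the $k = k'$, $l \neq l'$ compactness case in (i), which crucially needs the right-sided orthogonality $E_N(m_l m_{l'}^*) = 0$ from a two-sided basis; a secondary subtlety is checking that $\iota(S_F)$ commutes with $p_{\rm nor}$, via a straightforward extension of Lemma \ref{lem: projection commute}, so that the bidual manipulations are meaningful.
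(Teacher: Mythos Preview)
Your approach is essentially the same as the paper's: show the $p_{k,l}:=q_\K^\perp\iota_{\rm nor}(T_{k,l})$ are pairwise orthogonal projections via mixing, then show their sum acts as the identity on a weak$^*$-dense piece of $q_\K^\perp(\K_{\X_N}(M)^\sharp_J)^*$. Two points deserve comment.

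First, your insistence on a two-sided Pimsner--Popa basis is an artifact of the star placement in the displayed statement. In the paper's own computation (and in its application in the proof of Theorem~\ref{prop:bootstrap}) the relevant elements are $m_k\,Jm_lJ\,e_N\,Jm_l^*J\,m_k^*$, so that the product of two of them has core $e_N(m_k^*m_{k'})\,J(m_l^*m_{l'})J\,e_N$; both factors are then governed by the \emph{left} orthogonality $E_N(m_i^*m_j)=\delta_{ij}p_i$, and no right-sided condition is needed. Your Hall-matching workaround is correct for the lemma exactly as written, but it is not the route the paper takes; the paper simply works with the star on the other side.

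Second, your step (ii) is imprecise for general $u,v\in M$: the expansion $u=\sum_k m_kE_N(m_k^*u)$ converges only in $\|\cdot\|_2$, so for arbitrary $u$ there is no finite $F$ with $S_FT-T\in\K(L^2M)$. The paper circumvents this by introducing the auxiliary boundary piece $\X_0$ generated by $xJyJe_N$ with $x,y$ in the ${\rm C}^*$-algebra spanned by $\{m_ka:a\in N\}$, proving $\overline{\X_0}^{\|\cdot\|_{\infty,1}}=\K_{\X_N}^{\infty,1}(M)$, and then checking the identity only on the finitely-supported generators $m_kJm_\ell J\,aJbJ\,e_N$ with $a,b\in N$, for which the computation is exact. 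This is the cleaner density argument; your outline would need this reduction (or an explicit limiting argument) to be complete.
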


\begin{proof}
For notational simplicity, denote by $p_{k,l}= q_{\K}^{\perp}\iota_{\rm nor}(m_kJm_l^*Je_NJm_lJm_k^*)$. By mixing property of the inclusion $N\subset M$, we see that $p_{k,l}$ are pairwise orthogonal projections. Indeed, if $N\subset M$ is mixing, we have $e_NxJyJe_N-e_NE_N(x)JE_N(y)Je_N\in \K(M)$, i.e, is a compact operator when viewed as a bounded operator from the normed space $M$ to $L^2(M)$. 
Now we compute \[\begin{aligned}
    p_{k,l}p_{k',l'}&= q_{\K}^{\perp}\iota_{\rm nor} (m_k Jm_lJe_NJm_l^*m_{l'}Jm_k^*m_{k'}e_NJm_{l'}^*Jm_{k'}^*)\\
    &= q_{\K}^{\perp}\iota_{\rm nor} (m_k Jm_lJe_NJq_lJq_ke_NJm_{l'}^*Jm_{k'}^*)\delta_{k,k'}\delta_{l,l'}\\ &= q_{\K}^{\perp}\iota_{\rm nor}(m_kJm_l^*Je_NJm_{l'}^*Jm_{k'})\delta_{k,k'}\delta_{l,l'}
\end{aligned}\]

where $q_l\in \mathcal{P}(N)$ such that $q_l=E_N(m_l^*m_l)$ and automatically satisfies   $m_lq_l= m_l$ (see Section \ref{PP}).

Denote by $\X_0\subset \B(L^2 M)$ the hereditary ${\rm C}^*$-subalgebra generated by $ x JyJ e_N$ for $x,y$ in the ${\rm C}^*$-algebra  $A$    generated by   $\{m_ka\}_{a\in  N,  k\in \mathbb{N}}$.
It is clear that $\X_0$ is an $M$-boundary piece 
and note that $A$  is weakly dense (see Section 2.1, (\ref{PPbasisfact2})) in $M$.  

Observe that $\K_{\X_0}^{\infty,1}(M)=\K_{\X_{N}}^{\infty,1}(M)$
, where $\K_{\X_0}^{\infty,1}(M)$ is obtained from $\X_0$.
Notice that $\B(L^2 M) \X_0 \subset \K_\X^L(M)$ is dense in $\|\cdot\|_{\infty,2}$.
Indeed, for any contractions $T\in \B(L^2 M)$ and $x,y\in M$, 
	we may find a net of contractions $T_i\in \B(L^2 M)\X_0$ such that $T_i\to Te_N x JyJ$
	in $\|\cdot\|_{\infty,2}$,
	as it follows directly from \cite[Proposition 3.1]{DKEP21}.
It then follows that $\K_{\X_0}(M)\subset \K_\X^{\infty,1}(M)$ is dense in $\|\cdot\|_{\infty,1}$
	and hence $\overline{\X_0}^{\infty,1}=\K_{\X_0}^{\infty,1}(M)=\K_\X^{\infty,1}(M)$ by \cite[Proposition 3.6]{DKEP21}. Note that $p_{k,l}\in (\K_{\X_0}(M)_J^{\sharp})^*= (\K_{\X_N}(M)_J^{\sharp})^*$ and $p_{k,l}\leq q_{\K}^{\perp}q_{\X_N}$.

 By the  above paragraph  it suffices to check the following: $(\sum_{k',l'} p_{k',l'} )\iota_{\rm nor}(m_k J m_\ell J aJb Je_N)= q_{\K}^{\perp}\iota_{\rm nor}(m_k J m_\ell J   aJbJe_N)$ and $\iota_{\rm nor}(e_N   Jm_\ell J m_kaJb J) (\sum_{k',l'} p_{k',l'} )=q_{\K}^{\perp}\iota_{\rm nor}(e_N Jm_\ell J m_kaJb J)$ for all $a,b\in N$ and $k,l \in \mathbb{N}$. Indeed, every element in $\X_0$ can be written as a norm limit of linear spans consisting of elements of the from $x_1 Jy_1 J T Jy_2 J x_2$,
	where $x_i,y_i\in A$ and $T\in \B(L^2N)$
    Further we can assume $x_i=m_ka$ with $a\in N$ from density.  Then we will get that for all $z\in \X_0$, $\sum_{k,l}q_{\K}^{\perp}\iota_{\rm nor}(m_kJm_l^*Je_NJm_lJm_k^*)\iota_{\rm nor}(z)= q_{\K}^{\perp}\iota_{\rm nor}(z)$ and  since $\iota_{\rm nor}(\X_0)$ is weak$^*$ dense in $(\K_{\X_0}(M)_J^{\sharp})^*= (\K_{\X_N}(M)_J^{\sharp})^*$ by the previous paragraph, so we get that $\sum_{k,l}q_{\K}^{\perp}\iota_{\rm nor}(m_kJm_l^*Je_NJm_lJm_k^*)= q_{\K}^{\perp}q_{\X_{N}}$. 

 The above equality holds by a simple computation  
$$p_{k',l'} \iota_{\rm nor}(m_k J m_\ell J aJb Je_N)=
\delta_{k,k'}\delta_{l,l'}q_{\K}^{\perp}\iota_{\rm nor}(m_k J m_\ell J   aJbJe_N)$$ as in the beginning of this proof wherein we verified that $p_{k,l} $ are projections.\end{proof}

\begin{lem}\label{sup of projections}
    Let $M$ be a finite von Neumann algebra and $M_i\subset M$, $i=1, \dots n$ be von Neumann subalgebras such that $e_{M_i}$ are pairwise commuting. Let $\X$ denote the boundary piece associated to $\{M_i\}_{i=1}^{n}$ as in Example \ref{examp:bdrysub}. Let $\X_i$ denote the boundary pieces associated to $M_i$.  Let $q_i$ denote the identities of the von Neumann algebras   $( \K_{\X_i}(M)_J^\sharp )^*$ and  $q_\X$  denote the identity of   $( \K_\X(M)_J^\sharp )^*$.   Then we have that $q_\X= \vee_{i=1}^nq_i$.
\end{lem}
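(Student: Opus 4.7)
The inequality $\bigvee_{i=1}^n q_i \le q_\X$ will be immediate, because the inclusion $\K_{\X_i}(M) \subset \K_\X(M)$ induces a natural embedding of biduals under which $q_i$ becomes a subprojection of $q_\X$. For the reverse direction, the plan is to identify both $q_\X$ and $\bigvee_i q_i$ with $p_\X \cdot p_{\rm nor}$ in $\B(L^2M)^{**}$, where $p_\X$ denotes the open projection of the hereditary C$^*$-subalgebra $\X$ (the identity of $\X^{**}$ viewed inside $\B(L^2M)^{**}$) and similarly $p_{\X_i}$ for each $\X_i$.

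The first step will be the identification $p_\X = \bigvee_i p_{\X_i}$ in $\B(L^2M)^{**}$. The direction $\geq$ is immediate from $\X_i \subset \X$; for $\leq$, I will exploit the description $\X = \overline{B\cdot \B(L^2M)\cdot B}$, where $B = C^*(\{xJyJe_{M_i} : 1\leq i\leq n,\ x,y\in M\})$. Each generator $xJyJe_{M_i}$ lies in $\X_i$, so its image under $\iota$ lies in $\X_i^{**}=p_{\X_i}\B(L^2M)^{**}p_{\X_i}\subset p\,\B(L^2M)^{**}\,p$ with $p:=\bigvee_i p_{\X_i}$. Since the corner $p\B(L^2M)^{**}p$ is stable under products, norm closures, and the hereditary sandwich construction, $\iota(\X)\subset p\B(L^2M)^{**}p$, which forces $p_\X\leq p$.

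The second step will be to apply Lemma~\ref{lem: approx unit} twice. With $\X_0 = \X_j$ and approximate unit $\{e_F^{(j)} = \bigvee_{u,v\in F} uJvJe_{M_j}Jv^*Ju^*\}$ from Lemma~\ref{boundary piece equivalent defn approximate unit}, one obtains $q_j = p_{\X_j}p_{\rm nor}$; with $\X_0 = \X$ and $\{e_F\}$, one obtains $q_\X = p_\X p_{\rm nor}$. Combined with the commutation of each $p_{\X_j}$, and therefore of $p_\X = \bigvee_j p_{\X_j}$, with $p_{\rm nor}$, these identifications yield
\[
\bigvee_j q_j \;=\; \bigvee_j\bigl(p_{\X_j}p_{\rm nor}\bigr) \;=\; \Bigl(\bigvee_j p_{\X_j}\Bigr)p_{\rm nor} \;=\; p_\X p_{\rm nor} \;=\; q_\X,
\]
as required.

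The main technical obstacle will be verifying the commutation hypothesis $[\iota(e_F^{(j)}), p_{\rm nor}] = 0$ needed to invoke Lemma~\ref{lem: approx unit}, since $e_F^{(j)}$ is a supremum in $\B(L^2M)$ of noncommuting projections $uJvJe_{M_j}Jv^*Ju^*$ and $\iota$ is not normal. The plan for handling this is to work intrinsically in $\B(L^2M)^{**}$: rather than transporting suprema along $\iota$, one shows that $p_{\X_j}$ admits a weak-$*$ description as a supremum of the images $\iota(uJvJe_{M_j}Jv^*Ju^*)$, each of which commutes with $p_{\rm nor}$ by Lemma~\ref{lem: projection commute} together with the $p_{\rm nor}$-normality of the $M$- and $JMJ$-actions on the bidual. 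Since the commutant of $p_{\rm nor}$ in $\B(L^2M)^{**}$ is closed under projection suprema, one concludes that $p_{\X_j}$, and hence $p_\X$, commutes with $p_{\rm nor}$, which is precisely what is required to close the argument.
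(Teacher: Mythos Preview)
Your approach is correct and takes a genuinely different route from the paper. The paper argues at the level of the finite approximate units: it shows that for each $n$ the element $\iota_{\rm nor}(e_n)$ lies below $\vee_i q_i$ (by observing that each piece $e_{F_n}^{(i)}=\vee_{u,v\in F_n}uJvJe_{M_i}Jv^*Ju^*$ already belongs to $\X_i$, hence $\iota_{\rm nor}(e_{F_n}^{(i)})\leq q_i$), and then invokes Lemmas~\ref{boundary piece equivalent defn approximate unit} and~\ref{lem: approx unit} to pass to the limit $q_\X=\lim_n\iota_{\rm nor}(e_n)$. Your argument instead works directly with the open projections $p_{\X_j},p_\X$ of the hereditary subalgebras inside $\B(L^2M)^{**}$, establishes the lattice identity $p_\X=\vee_i p_{\X_i}$ there, and transfers it back via the identifications $q_j=p_{\X_j}p_{\rm nor}$ and $q_\X=p_\X p_{\rm nor}$. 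This is more structural and sidesteps the bookkeeping with the specific approximate units $\{e_F\}$; it also makes transparent why the pairwise commutation of the $e_{M_i}$ is not really needed for the inequality $\vee_i q_i \ge q_\X$ once one works with open projections.

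One point you should tighten. You write that Lemma~\ref{lem: approx unit} applied to $\{e_F^{(j)}\}$ yields $q_j = p_{\X_j}p_{\rm nor}$, but the lemma's hypothesis is $[\iota(e_F^{(j)}),p_{\rm nor}]=0$ for each \emph{approximant}, whereas your workaround only produces $[p_{\X_j},p_{\rm nor}]=0$ for the \emph{limit}. The fix is to bypass Lemma~\ref{lem: approx unit} altogether: once you know $[p_{\X_j},p_{\rm nor}]=0$, the projection $p_{\X_j}p_{\rm nor}$ lies in $p_{\rm nor}\X_j^{**}p_{\rm nor}\subset(\K_{\X_j}(M)_J^\sharp)^*$, so $p_{\X_j}p_{\rm nor}\leq q_j$. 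For the reverse inequality, note that $\iota_{\rm nor}(\X_j)\subset (p_{\X_j}p_{\rm nor})\B(L^2M)^{**}(p_{\X_j}p_{\rm nor})$ and, by the $\|\cdot\|_{\infty,1}$-density of $\X_j$ in $\K_{\X_j}^{\infty,1}(M)$ together with continuity of functionals in $\K_{\X_j}(M)_J^\sharp$ for that norm, $\iota_{\rm nor}(\X_j)$ is weak$^*$-dense in $(\K_{\X_j}(M)_J^\sharp)^*$; hence $q_j\leq p_{\X_j}p_{\rm nor}$. The same argument gives $q_\X=p_\X p_{\rm nor}$, and then your displayed chain of equalities goes through verbatim using that multiplication by the central projection $p_{\rm nor}\in\{p_{\rm nor}\}'$ is a normal $*$-homomorphism and hence preserves suprema.
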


\begin{proof}
Recall from the beginning of this section that $( \K_\X(M)_J^\sharp )^*$ is a von Neumann algebra, as $M, JMJ$ are in the multiplier algebra of  $\M( \K_\X(M))$.
It is easy to see that $q_\X\geq q_i$ for each $i$. Now we show that $q_\X\leq \vee_{i=1}^nq_i$. Fix an increasing family of finite subsets of unitaries in $M$, $F_n$ such that $\{\bigcup_{n}F_n\}''=M$. 
Let $e_{n}= \vee_{u,v\in F_n} u Jv J (\vee_{i} e_{M_i} )Jv ^*J u^*$. Clearly we have that $\iota_{\rm nor}(e_n)\leq \vee_{i=1}^nq_i$. 
Indeed, see that 
	$$\iota_{\rm nor}(\vee_{u,v\in F_n} u Jv J e_{M_i}Jv ^*J u^*)<q_i$$ 
	and then $\iota_{\rm nor}(e_{n})=\iota_{\rm nor}(\vee_{i}\vee_{u,v\in F_n} u Jv J e_{M_i}Jv ^*J u^*)\leq \vee_{i=1}^nq_i$. 
From Lemmas \ref{boundary piece equivalent defn approximate unit}  and \ref{lem: approx unit}  we see that $q_{\X}= \lim_{n} \iota_{\rm nor}(e_{n})\leq \vee_{i=1}^nq_i$    as required.  
\end{proof}


\subsection{Induced boundary pieces in the bidual}\label{induced bdry pieces}

\begin{lem}\label{lem:cond exp}
Let $M$ be a finite von Neumann algebra,  $\X$ an $M$-boundary piece,
	and $N\subset pMp$ a von Neumann subalgebra for some $0\neq p\in \PP(M)$.
Set $E:=\Ad(e_N)\circ \Ad(pJpJ):\B(L^2M)\to \B(L^2N)$.
Then its restriction $E_{| \bS(M)}$ maps $\bS_\X(M)$ to $\bS(N)$.
Moreover, there exists a u.c.p.\ map $\tilde E: \tilde \bS(M)\to \tilde \bS(N)$ such that $\tilde E_{\mid M}$ agrees with the conditional expectation from $M$ to $N$.
\end{lem}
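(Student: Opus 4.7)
The plan is to verify both conclusions — the operator-level inclusion $E(\bS_\X(M)) \subset \bS_{\X^N}(N)$ and the existence of a u.c.p.\ bidual extension $\tilde E$ — by combining a direct commutator computation with the bidual functoriality of the compression $E = V^{\ast}(\,\cdot\,)V$, where $V \colon L^2 N \hookrightarrow L^2 M$ denotes the canonical inclusion isometry with $VV^{\ast} = e_N$. Throughout I use that since $e_N \le pJpJ$ we have $E(T) = e_N T e_N$ after the $pJpJ$ factors are absorbed, and that $E_{|M} = E_N$ (the trace-preserving conditional expectation) because $e_N x e_N = E_N(pxp) = E_N(x)$ for any $x \in M$.

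For the operator-level inclusion, I first note that for every $x \in N \subset pMp$ the element $JxJ$ commutes with $e_N$ (since $L^2 N$ is $JNJ$-invariant) and with $pJpJ$ (since $xp = px = x$), so $JxJ$ commutes with $e_N$. Consequently, for any $T \in \bS_\X(M)$,
\[
[E(T), JxJ] \;=\; E\bigl([T, JxJ]\bigr) \;\in\; E\bigl(\K_\X^{\infty,1}(M)\bigr).
\]
Because $E$ is contractive in the $\|\cdot\|_{\infty,1}$-norm — immediate from $(N)_1 \subset (M)_1$ and the formula $\langle E(T)\hat x,\hat y\rangle = \langle T \hat x,\hat y\rangle$ for $x,y\in N$ — and because the induced boundary piece $\X^N = \mathcal E_N(\K_\X(M))$ satisfies $E(\K_\X(M)) \subset \X^N \subset \K_{\X^N}^{\infty,1}(N)$ by construction (Remark~\ref{remark}(\ref{rem:induced boundary piece})), density yields $E(\K_\X^{\infty,1}(M)) \subset \K_{\X^N}^{\infty,1}(N)$, whence $E(\bS_\X(M)) \subset \bS_{\X^N}(N)$.

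For the bidual extension, the decisive observation is that $E$ is normal in a strong sense: for vectors $\xi, \eta \in L^2 N \subset L^2 M$ one has
\[
\omega^{N}_{\xi, \eta} \circ E \;=\; \omega^{M}_{V\xi,\, V\eta},
\]
so the pullback $E^{\ast}$ sends normal vector functionals on $\B(L^2 N)$ to normal vector functionals on $\B(L^2 M)$. Since $E_{|M} = E_N$ and $E_{|JMJ} = JE_N J$ are both normal conditional expectations, this should upgrade to the assertion that $E^{\ast}$ maps the $\sharp_J$ predual $\bS_{\X^N}(N)^{\sharp}_J$ into $\bS_\X(M)^{\sharp}_J$. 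Dualizing yields a normal u.c.p.\ map $\tilde E \colon (\bS_\X(M)^{\sharp}_J)^{\ast} \to (\bS_{\X^N}(N)^{\sharp}_J)^{\ast}$ extending $E$; the commutator identity $[\tilde E(T), a] = \tilde E([T, a])$ for $a \in JNJ$ — valid because $JNJ$ commutes with $e_N$ and $pJpJ$ — together with the first step restricts $\tilde E$ to a u.c.p.\ map $\tilde \bS_\X(M) \to \tilde \bS_{\X^N}(N)$, and $\tilde E_{|M} = E_N$ by construction.

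The main obstacle is the verification that $E^{\ast}$ indeed preserves the $\sharp_J$ subspace: a functional $\omega \in \bS_{\X^N}(N)^{\sharp}_J$ is only separately normal on $N \times N$ and $JNJ \times JNJ$, not on $\B(L^2N)$, so it is not immediate that $(a, b) \in M\times M \mapsto \omega(e_N a T b e_N)$ is separately normal. My approach would be to decompose $a \in M$ using the bounded Pimsner--Popa basis $\{m_k\}$ for $N \subset M$ as $a = \sum_k m_k E_N(m_k^{\ast} a)$, so that $V^{\ast} a = \sum_k (V^{\ast}m_k)\, E_N(m_k^{\ast}a)$ factors normal $N$-coefficients past the fixed operators $V^{\ast}m_k$, and then apply the $\sharp$-property of $\omega$ term by term; alternatively one can exploit the concrete Stinespring form $E = V^{\ast}(\,\cdot\,)V$ to argue that $E^{\ast\ast}$ intertwines the respective projections $p_{\rm nor}^M$ and $p_{\rm nor}^N$ in $\B(L^2M)^{\ast\ast}$ and $\B(L^2 N)^{\ast\ast}$ directly.
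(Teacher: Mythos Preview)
Your argument for the first assertion coincides with the paper's: both use that $JaJ$ for $a\in N$ commutes with $e_N$ and $pJpJ$ to obtain $[E(T),J_NaJ_N]=E([T,JaJ])$, and then invoke the $\|\cdot\|_{\infty,1}$-continuity of $E$ to land in the appropriate compact closure.

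For the second assertion you again have the paper's strategy --- set $\tilde E=(E^*|_{\sharp_J})^*$ --- and you correctly isolate the only nontrivial step: that $E^*$ carries $\B(L^2N)^\sharp_J$ into $\B(L^2M)^\sharp_J$. The paper disposes of this in one line by citing \cite[Lemma~5.3]{DKEP21}; it then records the same fact for the compact ideals, $E^*:\K(L^2N)^\sharp_J\to\K(L^2M)^\sharp_J$, which is what actually forces $\tilde E(\tilde\bS(M))\subset\tilde\bS(N)$ rather than merely $\tilde E(\tilde\bS(M))\subset(\B(L^2N)^\sharp_J)^*$. Your Pimsner--Popa basis approach to this step does not work as written: no bounded basis is hypothesized in the present lemma (that assumption enters only in Theorem~\ref{prop:bootstrap}), the inclusion is $N\subset pMp$ rather than $N\subset M$, and even granting a basis the expansion $a=\sum_k m_kE_N(m_k^*a)$ converges only in $L^2$, so passing a non-normal $\omega$ through the infinite sum termwise is unjustified. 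Your alternative --- exploit the Stinespring form $E=V^*(\cdot)V$ to show $E^{**}$ intertwines the respective projections $p_{\rm nor}$ --- is the correct route and is essentially what the cited lemma encodes.
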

\begin{proof}
To see $E_{| \bS_\X(M)}: \bS(M) \to \bS(N)$, note that $p JpJ e_N J_N a J_N= J a J p JpJ e_N$ for any $a\in N$.
	and $E: \B(L^2M)\to \B(L^2N)$ is $\|\cdot\|_{\infty, 1}$-continuous.
Thus for any $T\in \bS_\X(M)$ and any $a\in N$, we have
$$ [E(T), J_N aJ_N]=E([T, JaJ])\in E(\overline {\K(M)}^{\|\cdot\|_{\infty,1}})=\overline{\K(N)}^{\|\cdot\|_{\infty,1}}=\K^{\infty,1}(N),$$
i.e., $E(T)\in \bS_(N)$.

Note that $E^*: \B(L^2N)^*\to \B(L^2M)^*$ maps $\B(L^2N)^{\sharp}_J$ to $\B(L^2M)^{\sharp}_J$ by \cite[Lemma 5.3]{DKEP21},
	and similarly $E^*: (\K(L^2N))^\sharp_J\to (\K(L^2M))^\sharp_J$.
Therefore $\tilde E:=({E^*}_{|\B(L^2N)^{\sharp}_J})^*: (\B(L^2M)^{\sharp}_J)^*\to (\B(L^2N)^{\sharp}_J)^*$
	and $\tilde E_{\mid (\K(L^2M))^\sharp_J)^*}: (\K(L^2M)^\sharp_J)^*\to (\K(L^2N)^\sharp_J)^*$.
Hence we conclude that $\tilde E: \tilde \bS(M)\to \tilde \bS(N)$  
	with $\tilde E_{\mid M}$ agrees with the conditional expectation from $M$ to $N$.
\end{proof}


\subsection{Relative biexactness and relative proper proximality}

Given a countable discrete group $\Gamma$, a boundary piece $I$ is a $\Gamma\times \Gamma$ invariant closed ideal such that $c_0\Gamma\subset I\subset \ell^\infty\Gamma$ \cite{BIP18}.  
The small at infinity compactification of $\Gamma$ relative to $I$ is the spectrum of the ${\rm C}^*$-algebra $\bS_I(\Gamma)=\{f\in\ell^\infty\Gamma\mid f-R_tf\in I, {\rm\ for\ any\ } t\in\Gamma\}$.
Recall that $\Gamma$ is said to be biexact relative to $X$ if $\Gamma \actson \bS_I(\Gamma)/I$ is topologically amenable 
	\cite{Oza04}, \cite[Chapter 15]{BO08}, \cite{BIP18}.
We remark that this is equivalent to $\Gamma\actson\bS_I(\Gamma)$ is amenable.
Indeed, since we may embed $\ell^\infty\Gamma \hookrightarrow I^{**}$ in a $\Gamma$-equivariant way, 
	we have $\Gamma\actson I^{**} \oplus (\bS_I(\Gamma)/I)^{**} = \bS_I(\Gamma)^{**}$ is amenable,
	and it follows that $\Gamma\actson \bS_I(\Gamma)$ is an amenable action \cite[Proposition 2.7]{BEW19}.

The following is a general version of \cite[Theorem 7.1]{DKEP21}, whose proof follows similarly.   For the convenience of the reader we include the proof sketch below.
A more general version of this is obtained in the upcoming work    \cite{DP22}.

\begin{thm}\label{thm:biexact dichotomy}
Let $M=L\Gamma$ where $\Gamma$ is an nonamenable group that is biexact relative to a finite family of subgroups $\{\Lambda_i\}_{i\in I}$. 
Denote by $\X$ the $M$-boundary piece associated with $\{L\Lambda_i\}_{i\in I}$.
If $A\subset pMp$ for some $0\neq p\in \mathcal{P}(M)$ such that $A$ has no amenable direct summands, 
then $A$ is properly proximal relative to $\X^A$, where $\X^A$ is the induced $A$-boundary piece as in 
Remark~\ref{remark}).
\end{thm}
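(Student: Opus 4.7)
The plan is to argue by contrapositive: assume $A$ is not properly proximal relative to $\X^A$ and deduce that $A$ has an amenable direct summand. This follows the template of \cite[Theorem 7.1]{DKEP21} in the absolute biexact case, with the relative boundary piece tracked throughout via the bidual formalism developed in the preceding subsections.

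First I would transport the witnessing state from $A$ up to $M$. By hypothesis there exists an $A$-central state $\varphi$ on $\bS_{\X^A}(A)$ with $\varphi|_A$ normal. The u.c.p.\ map $\mathcal{E}_A = \Ad(e_A)\circ\Ad(pJpJ):\B(L^2M)\to\B(L^2A)$ is $A$-bimodular, $\|\cdot\|_{\infty,1}$-continuous, and sends $\K_\X(M)$ into $\X^A$ by the very definition of the induced boundary piece (Remark~\ref{remark}(\ref{rem:induced boundary piece})); arguing as in Lemma~\ref{lem:cond exp}, it therefore restricts to a u.c.p., $A$-bimodular map $\bS_\X(M)\to\bS_{\X^A}(A)$. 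Composition yields $\tilde\varphi := \varphi\circ\mathcal{E}_A$, an $A$-central state on $\bS_\X(M)$ whose restriction to $M$ factors as $\varphi|_A \circ E_A \circ \Ad(p)$ and is consequently normal.

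Next I would invoke biexactness of $\Gamma$ relative to $\{\Lambda_i\}$: the action $\Gamma\actson\bS_I(\Gamma)$ is amenable, where $I$ is the associated group-level boundary piece. Mirroring the Fell-absorption argument from the proof of \cite[Theorem 7.1]{DKEP21}, this dynamical amenability produces, at the bidual level, a u.c.p., $M$-bimodular map $\Phi:\B(L^2M)\to\widetilde{\bS}_\X(M)=(\bS_\X(M)^\sharp_J)^*$ such that for $T$ in the $\ast$-algebra generated by $M$ and $JMJ$ one has $\Phi(T)-\iota_{\rm nor}(T)\in(\K_\X(M)^\sharp_J)^*$. Extending $\tilde\varphi$ canonically to a state $\hat\varphi$ on $\widetilde{\bS}_\X(M)$ (using Lemma~\ref{lem:bidual character}) and composing with $\Phi$ produces a state $\Psi:=\hat\varphi\circ\Phi$ on $\B(L^2M)$ that remains $A$-central and normal on $M$.

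The existence of such a $\Psi$ is equivalent, by the Connes--Ozawa--Popa characterization, to $A\subset M$ being amenable relative to $\C$, i.e.\ $A$ admitting a hypertrace in $M$. Letting $z\in\cZ(A)$ be the central support of $\Psi|_A$, the corner $zA$ is amenable, contradicting the no-amenable-direct-summand hypothesis. The main obstacle is the middle step: constructing the bidual-level lift $\Phi$ so that the group-dynamical amenability of $\Gamma\actson\bS_I(\Gamma)$ is correctly reconciled with the von Neumann algebra-level boundary piece $\X\subset\B(L^2L\Gamma)$. This bookkeeping between $I\subset\ell^\infty\Gamma$ and $\X$ is precisely what the approximate-unit control of Lemma~\ref{sup of projections} and the bidual formalism of Section~\ref{sec: bidual} are designed to support.
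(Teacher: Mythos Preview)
Your contrapositive skeleton matches the paper's: obtain an $A$-central state on $\bS_{\X^A}(A)$, pull it back through $\mathcal E_A$ to an $A$-central state on $\bS_\X(M)$ normal on $M$, then exploit relative biexactness to reach $\B(L^2M)$ and extract an amenable summand. The divergence, and the gap, lies entirely in the middle step.

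The paper does not construct a state-independent $M$-bimodular map $\Phi:\B(L^2M)\to\widetilde\bS_\X(M)$, and what you call ``Fell absorption'' is not what is used. Instead the paper embeds the reduced crossed product $\bS_I(\Gamma)\rtimes_r\Gamma$ into $\bS_\X(M)$ via the diagonal inclusion $\ell^\infty\Gamma\hookrightarrow\B(\ell^2\Gamma)$, composes with $\mathcal E_A$ and the $Ap_0$-bimodular map $\bS_{\X^A}(A)\to Ap_0$ (this is the form of non-proper-proximality used), takes a vector state on the crossed product, and does GNS. The Boutonnet--Carderi argument \cite[Proposition~4.1]{BC15} is then invoked to find a full-measure corner on which $C^*_r(\Gamma)$ extends to a \emph{normal} copy of $L\Gamma$. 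Only at that point does relative biexactness enter, in the form ``$\bS_I(\Gamma)\rtimes_r\Gamma$ is nuclear,'' allowing the normal $L\Gamma$-map to be extended to all of $\B(\ell^2\Gamma)$; the resulting $Ap_0$-central state gives the amenable summand. None of this passes through $\widetilde\bS_\X(M)$, and Lemma~\ref{sup of projections} is irrelevant here---it decomposes $q_\X$ as $\vee_i q_i$ for use in Theorem~\ref{prop:bootstrap}, not for matching the group-level ideal $I$ with $\X$.

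Your map $\Phi$ can in fact be built: the weak$^*$ closure of $\iota_{\rm nor}(\bS_I(\Gamma)\rtimes_r\Gamma)$ inside $(\B(L^2M)^\sharp_J)^*$ is injective (nuclear ${\rm C}^*$-algebras generate injective von Neumann algebras in any representation), contains $\iota_{\rm nor}(M)$ normally, and sits inside the weak$^*$-closed set $\widetilde\bS_\X(M)$; the conditional expectation onto it precomposed with $\iota_{\rm nor}$ is then automatically $M$-bimodular, bypassing Boutonnet--Carderi. But that is again nuclearity of the crossed product doing the work, not Fell absorption, and you would still need to spell this out. (Also, $\widetilde\bS_\X(M)\subset(\B(L^2M)^\sharp_J)^*$; it is not equal to $(\bS_\X(M)^\sharp_J)^*$ as you wrote.)
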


\begin{proof}
    Consider the $\Gamma$-equivariant diagonal embedding $\ell^\infty(\Gamma)\subset \B(\ell^2\Gamma)$. Note that under this embedding   $c_0(\Gamma,\{\Lambda_i\}_{i\in I})$ is sent to $\X$. Denote by $\bS_{\X}(\Gamma)=\{f\in \ell^\infty(\Gamma)|\ f-fg\in c_0(\Gamma,\{\Lambda_i\}_{i\in I}),\ \forall g\in \Gamma\}$, the relative small at infinity compactification  at the group level.   Restricting this embedding to $\bS_{\X}(\Gamma)$ then gives a $\Gamma$-equivariant embedding into $\bS_\X(M)$. Therefore we obtain a $*$-homomorphism from $\bS_{\X}(\Gamma)\rtimes_r \Gamma\to \B(\ell^2(\Gamma)) $ whose image is contained in $ \bS_{\X}(M)$. Composing this with the map $E$ from Lemma \ref{lem:cond exp}, we obtain a u.c.p map $\phi: \bS_{\X}(\Gamma)\rtimes_r \Gamma\to \bS_{\X^{A}}(A)$.     By hypothesis we have a projection $p_0\in \mathcal{Z}(A)$ and an $Ap_0$ bimodular u.c.p map $\Phi: \bS_{\X^A}(A)\to Ap_0 $. Further composing with this map we obtain a u.c.p map from $\widetilde{\phi}:\bS_{\X}(\Gamma)\rtimes_r \Gamma\to Ap_0$.  

    Now set $\varphi: \bS_{\X}(\Gamma)\rtimes_r \Gamma \to \C$, by $\varphi(x):=\frac{\langle x\widehat{p_0},\widehat{p_0} \rangle }{\tau(p)}$. We then get a representation $\pi_\varphi:\bS_{\X}(\Gamma)\rtimes_r \Gamma \to \mathcal{H}_{\varphi} $ and a state $\widetilde{\varphi}\in \B(\mathcal{H}_{\varphi})_{*}$ such that $\varphi=\widetilde{\varphi}\circ \pi_{\varphi}$. Since $C^*_r(\Gamma)$ is weakly dense in $M$, we see by an argument of Boutonnet-Carderi (see Propositon 4.1 in \cite{BC15}) that there is a projection $q\in (\pi_\varphi(\bS_{\X}(\Gamma)\rtimes_r \Gamma))''$ such that  $\widetilde{\varphi}(q)=1$ and there exists a normal unital $*$-homomorphism $\iota: L(\Gamma)\to q\pi_\varphi(\bS_{\X}(\Gamma)\rtimes_r \Gamma))''q$.       
    
    Since $\Gamma$ is biexact relative to $\X$, we have that $\bS_{\X}(\Gamma)\rtimes_r \Gamma$ is a nuclear $C^*$-algebra. Therefore there is a u.c.p map $\widetilde{\iota}:\B(\ell^2(\Gamma))\to  q(\bS_{\X}(\Gamma)\rtimes_r \Gamma)''q$    extending $\iota$. Now we see that $\widetilde{\varphi}\circ\widetilde{\iota}$ is an $Ap_0$ central state on $\B(\ell^2(\Gamma))$ showing that $A$ has an amenable direct summand, which is a contradiction.
\end{proof}



In the case of general free products of finite von Neumann algebras $M=M_1*M_2$ it ought to be the case that that if $A\subset M$ such that $A$ has no amenable direct summand, then $A\subset M$ is properly proximal relative to the boundary piece generated by $M_1$ and $M_2$. However, currently we are only able to obtain this with an additional technical assumption that $M_i\cong L \Gamma_i$ where $\Gamma_i$ are exact, so that $\Gamma_1\ast \Gamma_2$ is  biexact relative to $\{\Gamma_1, \Gamma_2\}$ \cite[Proposition 15.3.12]{BO08}. We record below a general result about subalgebras in free products which follows essentially from Theorem 9.1 in \cite{DKEP21}, however we do not get the boundary piece associated to the subalgebras $M_i$. We instead get the boundary piece associated to the word length:    

Let $M_1$, $M_2$ be two finite von Neumann algebras and $M=M_1* M_2$ be the tracial free product. 
Let  $A\subset M$ be a nonamenable subalgebra. Consider the free product deformation from \cite{IPP08}, i.e., $\tilde M =M\ast L\mathbb F_2$, $\theta_t=\Ad(u_1^t)\ast \Ad(u_2^t)\in {\rm Aut}(\tilde M)$, with $u_1^t=\exp( i t\alpha_1)$, $u_2^t= \exp( i t\alpha_2)$, where $\alpha_1$, $\alpha_2$ are selfadjoint element in $L\mathbb F_2$ such that $\exp(i \alpha_1)=u_1$, $\exp(i \alpha_2)=u_2$ and $u_1$, $u_2$ are Haar unitaries in $L\mathbb F_2$. For $t>0$, we have $E_M\circ \alpha_t=P_0+ \sum_{n=1}^\infty (\sin(\pi t)/\pi t)^{2n} P_{n}$ (see Section 2.5 in \cite{Ioa15}), where $P_n$ is the orthogonal projection to $\mathcal H_n=\oplus_{(i_1,\cdots, i_n)\in S_n} L^2(M_{i_1}\ominus \mathbb C)\otimes \cdots\otimes L^2(M_{i_n}\ominus \mathbb C)$ and $S_n$ is the set of alternating sequences of length $n$. Consider the hereditary $C^*$-algebra $\mathbb X_F$ generated by $\{P_n\}_{n\geq 0}$.  

\begin{prop}

 In the above setup, there exists a projection $p\in A$ such that $Ap$ is amenable and  $Ap^{\perp}$ is properly proximal relative to $\X_F$.   

\end{prop}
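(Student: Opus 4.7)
The plan is to mimic the template of \cite[Theorem 9.1]{DKEP21}, with the IPP free-product deformation $\theta_t$ playing the role that biexactness played in Theorem~\ref{thm:biexact dichotomy}. First I would let $p\in \cZ(A)$ be the maximal central projection with $Ap$ amenable; such a $p$ exists by stability of amenability under direct sums (as in Remark~\ref{remark}(\ref{rem:maximal prop prox sum})). Setting $B:=Ap^\perp$, which by construction has no amenable direct summand, it suffices to show $B$ is properly proximal relative to $\X_F$ inside $M$.

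Suppose this fails. By Lemma~\ref{lem:bidual character} there is a $B$-central state $\varphi$ on $\widetilde{\bS}_{\X_F}(M)$ with $\varphi_{\mid M}$ normal, and I would aim to promote $\varphi$ to a $B$-central state $\psi$ on $\B(L^2M)$ still normal on $M$; such $\psi$ would witness amenability of a nonzero summand of $B$, contradicting maximality of $p$. The relevance of $\X_F$ to the deformation is the identity
\[
\id_M - E_M\circ \theta_t\vert_M = \sum_{n\geq 1} \bigl(1- (\sin(\pi t)/\pi t)^{2n}\bigr) P_n,
\]
which shows that this defect is a normal c.p.\ map factoring through the hereditary ${\rm C}^*$-algebra generated by $\{P_n\}_{n\geq 1}\subset \X_F$. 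Extending $\theta_t$ to $\B(L^2\tilde M)$ via $\Ad(U_t)$ for the implementing unitary $U_t$, I would build for each $t>0$ a u.c.p.\ map $\Phi_t:\B(L^2M)\to \widetilde{\bS}_{\X_F}(M)$, essentially of the form $T\mapsto e_M\theta_t(T)e_M$, reinterpreted in the basic construction $M\subset \tilde M$ and pushed into the bidual as in Section~\ref{sec: bidual}. Taking $\psi$ to be a weak$^*$-cluster point of $\{\varphi\circ \Phi_t\}$ as $t\to 0^+$ gives the candidate state.

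The main obstacle is verifying $B$-centrality of $\psi$: since $\theta_t$ does not commute with $M$, centrality of $\varphi$ cannot be transferred directly. One must show that for $b\in B$ and $T\in \B(L^2M)$, the commutator $[\Phi_t(T),b]$ lies, modulo $\K_{\X_F}^{\infty,1}(M)$, in an error killed by $\varphi$ as $t\to 0$; this relies on how $\Ad(U_t)$ interacts with $\Ad(JbJ)$ in the bidual picture and on the coefficient convergence $c_n(t)\to 1$. Normality of $\psi\vert_M$ is the easier half, coming from pointwise $\|\cdot\|_2$-convergence $\theta_t\to \id$ on $M$ and the explicit spectral formula above. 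Once both properties are in place, the contradiction with the choice of $p$ is immediate.
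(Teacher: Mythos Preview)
Your overall strategy—take $p\in\cZ(A)$ maximal with $Ap$ amenable, suppose $B:=Ap^\perp$ fails to be properly proximal relative to $\X_F$, and contradict maximality by producing a $B$-central state on $\B(L^2M)$ normal on $M$—is exactly the paper's. The divergence is in how one passes from the $B$-central state $\varphi$ on $\tilde\bS_{\X_F}(M)$ back to $\B(L^2M)$.

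The paper does not build time-dependent maps $\Phi_t$ from the deformation and take a limit. Instead it invokes the proof of \cite[Proposition 9.1]{DKEP21} to obtain a single $M$-\emph{bimodular} u.c.p.\ map
\[
\phi:(M^{\rm op})'\cap\B(L^2\tilde M\ominus L^2M)\longrightarrow\tilde\bS_{\X_F}(M),
\]
and then uses Ioana's bimodule isomorphism $L^2\tilde M\ominus L^2M\cong L^2M\,\overline\otimes\,\cK$ \cite[Lemma 2.10]{Ioa15} to embed $\B(L^2M)$ into the domain as $\B(L^2M)\otimes\id_\cK$. Because $\phi$ is $M$-bimodular, $\varphi\circ\phi$ is automatically $Ap^\perp$-central and normal on $M$; one then reads off an amenable direct summand of $Ap^\perp$ from the support projection in $\cZ(Ap^\perp)$, contradicting maximality of $p$. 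No limit in $t$ is taken, and the ``main obstacle'' you flag never arises.

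That obstacle is a genuine gap in your route, not a detail. Your $\Phi_t(T)=e_M\theta_t(T)e_M$ is not $M$-bimodular: the defect $b\,\Phi_t(T)-\Phi_t(bT)$ is governed by $(b-\theta_t(b))$ multiplied against an \emph{arbitrary} $T\in\B(L^2M)$, and $\|b-\theta_t(b)\|_2\to 0$ gives no control over such products, nor does it place the error in $\K_{\X_F}^{\infty,1}(M)$. You also have not argued why $\Phi_t(T)$ lands in $\tilde\bS_{\X_F}(M)$ for general $T$. Both issues dissolve once one works in the commutant $(M^{\rm op})'\cap\B(L^2\tilde M\ominus L^2M)$ and exploits the coarse $M$-$M$ bimodule structure of $L^2\tilde M\ominus L^2M$; that packaging is precisely what the cited result supplies, and it is the piece your sketch is missing.
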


\begin{proof}
It follows from the proof of \cite[Proposition 9.1]{DKEP21} that there exists an $M$-bimodular u.c.p. map $\phi: (M^{\rm op})'\cap \mathbb B(L^2\tilde M\ominus L^2 M)\to \tilde {\mathbb S}_{\mathbb X_F}(M)$.
Moreover, since $L^2\tilde M\ominus L^2M\cong L^2M\overline\otimes\mathcal K$ as an $M$-$M$ bimodule for some right $M$ module $\mathcal K$ \cite[Lemma 2.10]{Ioa15}, we may restrict $\phi$ to $\mathbb B(L^2M)\otimes \id_\mathcal K$.
Take $p\in\mathcal Z(A)$ to be the maximal projection such that $Ap$ is amenable and $p\neq 1$ as $A$ is nonamenable. If $Ap^{\perp}$ is not properly proximal relative to $\mathbb X$ inside $M$, i.e., there exists an $A$-central state $\varphi$ on $\tilde{\mathbb S}_{\mathbb X_F}(M)$ which is normal when restricted to $p^\perp Mp^\perp$.
Then pick $q\in \mathcal Z(Ap^\perp)$ be the support projection of $(\varphi\circ\phi)_{\mid Ap^\perp}$ and we have $Aq$ is amenable, which contradicts the maximality of $p$.

\end{proof}


\section{The Upgrading Theorem }

\begin{proof}[Proof of Theorem \ref{prop:bootstrap}]
    First notice that since $A$ is properly proximal relative to $\X$ inside $M$, 
	it has no amenable direct summand by (\ref{rem:rel prop prox implies no amen sum}) of Remark~\ref{remark}.
Let $f\in \mathcal Z(A)$ be the projection such that $Af^\perp$ is the maximal properly proximal direct summand of $A$ by (\ref{rem:maximal prop prox sum}) of  Remark~\ref{remark},
    and we may assume $f\neq 0$ since otherwise $A$ would be properly proximal.
Therefore $Af$ has no amenable direct summand, is properly proximal 
    relative to $\X$ inside $M$ by (\ref{rem:sum of prop prox}) of Remark~\ref{remark} 
	and has no properly proximal direct summand.
It follows from Lemma~\ref{lem:bidual character} that 
	there exists an $Af$-central state $\mu$ on $\widetilde{\bS}(Af)$ such that $\mu_{\mid Af}$ is normal.
Moreover, by a maximal argument, we may assume $\mu_{\mid \mathcal Z(Af)}$ is faithful, as $Af$ has no properly proximal direct summand.

Let $\tilde E:\tilde \bS(M)\to \tilde \bS(Af)$ be the u.c.p.\ map as in Lemma~\ref{lem:cond exp}.
Define a state $\varphi=\mu\circ \tilde E: \tilde \bS(M)\to \C$,
	and it follows that $\varphi$ is $Af$-central and $\varphi_{\mid fMf}$ is a faithful normal state.
Let $q_\K$ be the identity of the von Neumann algebra $(\K(L^2M)^\sharp_J)^*\subset (\B(L^2M)^\sharp_J)^*$, 
	$q_\X$ the identity of von Neumann algebra $(\K_\X(M)^\sharp_J)^* \subset (\B(L^2M)^\sharp_J)^*$.
Note that $q_\K\leq q_\X$ as $\K(L^2M)\subset \K_\X(M)$.

First we analyze the support of $\varphi$.
Observe that $\varphi(q_{\K}^{\perp})=1$.
Indeed, if $\varphi(q_{\K})> 0$, i.e., $\varphi$ does not vanish on $(\mathbb K (L^2M)^\sharp_J)^*$, 
	then we may restrict $\varphi$ to $\mathbb B(L^2M)$, 
	which embeds into $(\mathbb K (L^2M)^\sharp_J)^*$ as a normal operator $M$-system \cite[Section 8]{DKEP21}, 
	and this shows that $Af$ would have an amenable direct summand.
Moreover, we have $\varphi(q_{\X})=1$.
Indeed, if $\varphi(q_{\X}^\perp)>0$, 
	then 
$$\frac{1}{\mu(q_\X^\perp)}\varphi \circ \Ad(q_\X^\perp): \tilde {\mathbb S}_{\mathbb X}(M)\to\mathbb C$$ 
	would be an $Af$-central that restricts to a normal state on $fMf$.
Since $\bS_\X(M)$ naturally embeds into $\widetilde{{\mathbb S}}_{\mathbb X}(M)$, this contradicts that $Af$ is properly proximal relative to $\X$ inside $M$.
Therefore we conclude that $\varphi(q_\X q_\K^\perp)=1.$

For each $1\leq i\leq n$, denote by $\X_i:=\X_{M_i}\subset \B(L^2M)$ the $M$-boundary piece associated with $M_i$
	and $q_i\in (\K_{\X_i}(M)^\sharp_J)^*$ the identity.
Since $\vee_{i=1}^n q_i= q_\X$ by Lemma \ref{sup of projections},
    we have $\varphi(q_jq_{\K}^\perp)>0$ for some $1\leq j \leq n$

\noindent{\bf Claim:} there exists a u.c.p.\ map $\phi: \langle M, e_{M_j}\rangle \to q_\K^\perp q_j \tilde \bS(M) q_j$ 
	such that $\phi(x)=q_\K^\perp q_j x$ for any $x\in M$.

\begin{proof}[Proof of the claim.]
\renewcommand\qedsymbol{$\blacksquare$}
Denote by $\{m_k\}_{k\geq 0}\subset M$ a bounded Pimsner-Popa basis of $M$ over $M_i$.
For each $n\geq 0$, consider the u.c.p.\ map $\psi_n: \langle M, e_{M_j}\rangle \to \langle M, e_{M_j}\rangle$ given by 
	$$\psi_n(x)=(\sum_{k\leq n} m_k e_{M_j} m_k^*) x (\sum_{\ell \leq n} m_\ell e_{M_j} m_\ell^*),$$
	and notice that $\psi_n$ maps $\langle M, e_{M_j}\rangle$ into the $*$-subalgebra $A_0 :={\rm sp}\{ m_k a e_{M_j} m_\ell ^*\mid a\in M_j, k,\ell\geq 0\}$.

Recall notations from Remark \ref{rem:bidual notation}.
By Lemma~\ref{lem: projection commute}, we have 
$$\{\iota_{\rm nor}(Jm_k J e_{M_j} J m_k^* J)\}_{k\geq 0} \subset (\B(L^2M)^\sharp_J)^*$$
	is a family of pairwise orthogonal projections.
Set 
    $$e_j=\sum_{k\geq 0} \iota_{\rm nor}(Jm_k J e_{M_j} J m_k^* J) \in (\B(L^2M)^\sharp_J)^*$$ 
    and 
    define the map
\[\begin{aligned}
        \phi_0: A_0  &\to  q_{\K}^\perp (\B(L^2M)^\sharp_J)^*\\
        m_r a e_{M_j} m_\ell^*&\mapsto q_\K^\perp \iota_{\rm nor}(m_r a) e_{j} \iota_{\rm nor}(m_\ell^*).
\end{aligned}\]

It is easy to check that $\phi_0$ is well-defined.
We then check that $\phi_0$ is a $*$-homomorphism.
It suffices to show that for any $x\in M$, we have 
\begin{equation}\label{equa1}
q_\K^\perp e_{j} \iota_{\rm nor}(x) e_{j} = q_\K^\perp  \iota_{\rm nor}(E_{M_j}(x)) e_{j}.
\end{equation}
Now we compute, 
\[
\begin{aligned}
&q_\K^\perp  e_{j} \iota_{\rm nor}(x) e_{j} \\
=&q_\K^\perp  \sum_{k,\ell \geq 0} \iota _{\rm nor}\big ((Jm_k J e_{M_j} Jm_k^* J) x( Jm_\ell J e_{M_j} Jm_\ell ^*J)\big ) \\
=&q_\K^\perp \sum_{k\geq 0} \iota _{\rm nor}\big ((Jm_k J e_{M_j} Jm_k^* J) x( Jm_k J e_{M_j} Jm_k ^*J)\big ) +\sum_{k\neq \ell} \iota_{\rm nor}\big ((Jm_k J e_{M_j} Jm_k^* J )x (Jm_\ell J e_{M_j} Jm_\ell ^*J )\big ).
\end{aligned}
\]
By Remark~\ref{rem:compact}, we have $(Jm_k J e_{M_j} Jm_k^* J)( x-E_{M_j}(x))(Jm_\ell J e_{M_j} Jm_\ell ^*J)\in \B(L^2M)$ is a compact operator from $M$ to $L^2M$ for $k\neq \ell$.
Since $(Jm_k J e_{M_j} Jm_k^* J)E_{M_j}(x)(Jm_\ell J e_{M_j} Jm_\ell ^*J)=0$ if $\ell\neq k$, we have $\sum_{k\neq \ell} q_\K^\perp  \iota_{\rm nor}(Jm_k J e_{M_j} Jm_k^* J x Jm_\ell J e_{M_j} Jm_\ell ^*J) =0$.
Similarly, one checks that $q_\K^\perp\iota _{\rm nor}\big( (Jm_k J e_{M_j} Jm_k^* J) x( Jm_k J e_{M_j} Jm_k ^*J)\big)= q_\K^\perp \iota _{\rm nor}\big( E_{M_j}(x)    (Jm_k J e_{M_j}Jm_k^* J)\big)$.

It then follows from (\ref{equa1}) that $\phi_0$ is a $*$-homomorphism.  
Now we verify that  $\phi_0$ is norm continuous.  

Given $\sum_{i=1}^d m_{k_i} a_i e_{M_j} m_{\ell_i}^*\in A_0$, we may assume that $k_i\neq k_j$ and $\ell_i\neq \ell_j$ if $i\neq j$.
Consider $P_k=  q_\K^\perp \sum_{i=1}^d \iota_{\rm nor}(Jm_{k}Jm_{\ell_i}e_{M_j}m_{\ell_i}^*Jm_{k}^ *J)$
	and $Q_k = q_\K^\perp \sum_{i=1}^d \iota_{\rm nor}(Jm_{k}Jm_{k_i}e_{M_j}m_{k_i}^*Jm_{k}^ *J)$.
We have $P_k$ and $Q_k$ are a projections and $P_k P_r=Q_k Q_r=0$ if $k\neq r$ by Remark~\ref{rem:compact}.
And for the same reason, we have $\iota_{\rm nor}(e_{M_j} m_{\ell_i}^* Jm_k^*J) P_k= q_\K^\perp \iota_{\rm nor}(e_{M_j} m_{\ell_i}^* Jm_k^*J)$
	as well as $\iota_{\rm nor}(e_{M_j} m_{k_i}^* Jm_k^*J) Q_k=q_\K^\perp \iota_{\rm nor}(e_{M_j} m_{k_i}^* Jm_k^*J)$ for each $1\leq i\leq d$.
Let $\cH$ be the Hilbert space where $(\B(L^2M)^\sharp_J)^*$ is represented on.
For $\xi, \eta\in (\cH)_1$, we compute 
\[
\begin{aligned}
|\langle \phi_0(\sum_{i=1}^d m_{k_i} a_i e_{M_j} m_{\ell_i}^*)\xi, \eta\rangle|
&\leq \sum_{k\geq 0} | \sum_{i=1}^d \langle q_\K^\perp \iota_{\rm nor}(e_{M_j} m_{\ell_i}^* Jm_k ^* J)\xi, \iota_{\rm nor} ( Jm_k J m_{k_i}e_{M_j} a_i )^*\eta\rangle|\\
&= \sum_{k\geq 0} | \sum_{i=1}^d \langle  \iota_{\rm nor}(e_{M_j} m_{\ell_i}^* Jm_k ^* J)P_k\xi, \iota_{\rm nor} ( Jm_k J m_{k_i}e_{M_j} a_i )^*Q_k\eta\rangle|\\
&\leq \sum_{k\geq 0} \|\iota_{\rm nor}(Jm_k J(\sum_{i=1}^d  m_{k_i} a_i e_{M_j} m_{\ell_i}^*) Jm_k ^* J)\|\|P_k\xi \| \|Q_k \eta\|\\
&\leq (\sup_{k\in \mathbb{N}}\|m_k\|^2)\|\sum_{i=1}^d  m_{k_i} a_i e_{M_j} m_{\ell_i}^*\|(\sum_{k\geq 0} \|P_k\xi \|^2)^{1/2} (\sum_{k\geq 0} \|Q_k\xi \|^2)^{1/2}\\
&\leq   (\sup_{k\in \mathbb{N}}\|m_k\|^2)\|\sum_{i=1}^d  m_{k_i} a_i e_{M_j} m_{\ell_i}^*\|.
\end{aligned}
\]
This shows that $\phi_0$ is norm continuous as required.

Lastly we show that $\phi_0$ maps into $q_\K^\perp \tilde \bS(M)$.
It suffices to show that $[e_j,\iota_{\rm nor}(Jm_\ell uJ)]=0$ for all $\ell \in \mathbb{N}$ and $u\in \mathcal{U}(M_j)$,
    since $\phi_0(A_0)$ commutes with $\iota_{\rm nor}(JMJ)$.

Without loss of generality, we may assume that $m_0=1$. We compute 
\[
\begin{aligned}
[e_j, \iota_{\rm nor}( Jm_\ell uJ)]&=  q_\K^\perp \big(\sum_{k\geq 0} \iota_{\rm nor}(Jm_kJe_{M_j}Jm_k^*m_\ell uJ)- \iota_{\rm nor}(Jm_\ell um_kJe_{M_j}Jm_k^*J)\big)\\
&=  q_\K^\perp \big(\sum_{k\geq 0}\iota_{\rm nor} (Jm_ke_{M_j}m_k^*m_\ell e_{M_j}ue_{M_j}J)-\iota_{\rm nor}   (Jm_\ell e_{M_j}ue_{M_j}m_ke_{M_j}m_k^*J)\big) \\
&= q_\K^\perp \big( \iota_{\rm nor} (Jm_\ell uJe_{M_j})- \iota_{\rm nor} (Jm_\ell uJe_{M_j})\big)=0.
\end{aligned}
\]
Combining all the above arugments, we may extend $\phi_0: A\to q_\K^\perp \tilde \bS(M)$ to a $*$-homomorphism on $A$, 
    where $A=\overline{A_0}^{\|\cdot\|}$ is a ${\rm C^*}$-algebra. 

The next step is to define the map $\phi$. 
For each $n\geq 0$, set $\phi_n:= \phi_0\circ \psi_n: \langle M, e_{M_j}\rangle \to q_\K^\perp \tilde \bS(M)$,
	which is c.p.\ and subunital by construction.
We may then pick $\phi\in CB(\langle M, e_{M_j}\rangle, q_\K^\perp \tilde \bS(M))$ a weak$^*$ limit point of $\{\phi_n\}_{n\in \mathbb{N}}$, 
	which exists as $q_\K^\perp \tilde \bS(M)$ is a von Neumann algebra.

We claim that 
$$\Ad(q_j)\circ \phi:\langle M, e_{M_j}\rangle \to q_{\K}^\perp q_j \widetilde{\bS}(M) q_{j} $$ 
is an $M$-bimodular u.c.p.\ map, which amounts to showing $\phi(x)=q_{\K}^\perp q_j \iota_{\rm nor}(x)$ for any $x\in M$. 

In fact, for any $x\in M$, we have 
\[
\begin{aligned}
    \phi(x)=&\lim_{n\to\infty} \phi_0\Big(\sum_{0\leq k,\ell\leq n} (m_k  E_{M_j}( m_k^* x m_\ell)e_{M_j}  m_\ell ^*)\Big)\\
    =& q_{\K}^{\perp} \lim_{n\to\infty} \sum_{0\leq k,\ell\leq n} \iota_{\rm nor}( m_k  E_{M_j}( m_k^* x m_\ell) ) e_{j}\iota_{\rm nor}( m_\ell ^*)\\
	=&  q_{\K}^{\perp} \lim_{n\to\infty} \sum_{0\leq k,\ell\leq n} \big ( \iota_{\rm nor} (m_k) e_{j} \iota_{\rm nor}(m_k^*)\big ) 
	\iota_{\rm nor}( x)\big( \iota_{\rm nor}( m_\ell )e_{j} \iota_{\rm nor}(m_\ell^*)\big),\\
\end{aligned}
\]
where the last equation follows from (\ref{equa1}).
Finally, note that $\{p_k\}_{k\geq 0}$ is a family of pairwise orthogonal projections by Remark~\ref{rem:compact} , where
$$p_k:=q_\K^\perp  \iota_{\rm nor} (m_k) e_{j} \iota_{\rm nor}(m_k^*)=q_\K^\perp \sum_{r\geq 0} \iota_{\rm nor}(Jm_r J m_k e_{M_j} m_k^* Jm_r^*J),$$
	and $\sum_{k\geq 0}p_k= \sum_{k,r\geq 0}q_\K^\perp \iota_{\rm nor}(Jm_r J m_k e_{M_j} m_k^* Jm_r^*J)=q_\K^\perp q_j$ by Lemma \ref{lem: identity}. 
Therefore, we conclude that $\phi(x)=q_\K^\perp q_j \iota_{\rm nor}(x)$, as desired.   
\end{proof}

Now consider
	$\nu=\varphi\circ\phi\in \langle M, e_{M_j}\rangle^*$ 
	and notice that $\frac{1}{\varphi(q_\K^\perp q_j)}\nu$ is an $Af$-central state, 
	which is a normal state when restricted to $fMf$.
Let $f_j\in \mathcal Z((Af)'\cap fMf)$ be the support projection of $\nu_{|\mathcal Z((Af)'\cap fMf)}$ and then we have $Af_j$ is amenable relative to $M_j$ inside $M$ \cite[Theorem 2.1]{OzPo10a}.
Apply the same argument for each $i$ with $\varphi(q_\K^\perp q_i)>0$, we then obtain  projections $f_i\in fMf$ (possibly $0$) such that $Af_i$ is amenable relative to $M_i$ inside $M$.

Finally, to show $\vee_{i=1}^n f_i=f$, 
	note that $\varphi(q_i f_i)=\varphi(q_i)$ as 
	$$\varphi(q_i f_i^\perp)=\varphi(q_\K^\perp q_i f_i^\perp)=\varphi(\phi(f_i^\perp))=\nu(f_i^\perp)=0.$$
Consequently we have 
	$$\varphi(\vee_{i=1}^n f_i)\geq \varphi(\vee_{i=1}^n q_i f_i)\geq \varphi(\vee_{i=1}^n q_i)=1,$$ 
	and hence $\vee_{i=1}^n f_i=f$ by the faithfulness of $\varphi_{\mid fMf}$. 
Since $f_i\in \mathcal Z((Af)'\cap fMf)$, we may rearrange these projections so that $\sum_{i=1}^n f_i=f$.
\end{proof}

\section{Proofs of main theorems}

\begin{proof}[Proof of Theorem~\ref{cor:biexact}]
This follows from noticing that the Jones projections $e_{L\Lambda_i}$ pairwise commute, and then applying Theorem~\ref{thm:biexact dichotomy} and Thoerem~\ref{prop:bootstrap}.
\end{proof}

\begin{thm}\label{thm:subalg in free product}
Let $(M_1,\tau_1)$ and $(M_2, \tau_2)$ be such that $M_i\cong L\Gamma_i$ where $\Gamma_i$ are countable exact groups and $M=M_1\ast M_2$ be the tracial free product. 
Let $A\subset M$ be von Neumann subalgebra, then there exists projections $\{p_i\}_{i=1}^3\in \mathcal{Z}(A'\cap M)$  such that $Ap_i\prec_M M_i$ for each $i=1$ and $2$, $Ap_3$ is amenable and $A(\vee_{i=1}^3 p_i)^\perp$ is properly proximal.
\end{thm}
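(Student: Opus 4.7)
The plan is to invoke Theorem \ref{cor:biexact} for $\Gamma=\Gamma_1\ast\Gamma_2$ and then upgrade the ``amenable relative to $M_i$'' conclusion into an intertwining statement by using Ioana's dichotomy (Lemma \ref{Ioana}), which is tailored exactly for relative amenability to one side of a free product.

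First I verify the hypotheses of Theorem \ref{cor:biexact}. Since $\Gamma_1$ and $\Gamma_2$ are countable exact groups, $\Gamma:=\Gamma_1\ast\Gamma_2$ is biexact relative to $\{\Gamma_1,\Gamma_2\}$ by \cite[Proposition 15.3.12]{BO08}, and each $\Gamma_i$ is malnormal in $\Gamma$ by standard free product theory. Theorem \ref{cor:biexact} applied to $A\subset M=L\Gamma$ therefore produces a projection $p\in \mathcal Z(A)$ together with projections $q_1,q_2\in \mathcal Z((Ap)'\cap pMp)$ satisfying $q_1\vee q_2=p$, such that $Aq_i$ is amenable relative to $M_i$ inside $M$ for $i=1,2$, and $Ap^\perp$ is properly proximal. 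A short computation using $p\in\mathcal Z(A)$ and $q_i\leq p$ shows that the $q_i$ in fact lie in $\mathcal Z(A'\cap M)$.

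Next, Lemma \ref{Ioana} applied to each $Aq_i$ gives two alternatives: either $Aq_i\prec_M M_i$, or $Aq_i$ is amenable. I define $p_i\in \mathcal Z(A'\cap M)$ (for $i=1,2$) to be the maximal projection with $Ap_i\prec_M M_i$, and $p_3\in \mathcal Z(A'\cap M)$ to be the maximal projection with $Ap_3$ amenable; existence follows from a standard Zorn argument given the stability of each property under orthogonal direct sums in $\mathcal Z(A'\cap M)$. The dichotomy applied to each $q_i$ yields $q_i\leq p_1\vee p_2\vee p_3$, so $p=q_1\vee q_2\leq \bigvee_{i=1}^3 p_i$, whence $(\bigvee_{i=1}^3 p_i)^\perp\leq p^\perp$. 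Since proper proximality passes to direct summands by Remark \ref{remark}(\ref{rem:sum of prop prox}), the summand $A(\bigvee_{i=1}^3 p_i)^\perp$ inherits proper proximality from $Ap^\perp$, completing the proof.

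The main technical point lies in verifying that the maximal projections $p_1,p_2,p_3\in \mathcal Z(A'\cap M)$ exist and that the projections $q_i$ produced by Theorem \ref{cor:biexact} indeed lie in this center. The former follows from standard permanence of $\prec_M$ and of amenability under direct sums, while the latter amounts to a direct algebraic manipulation using the two centrality hypotheses on $p$ and on $q_i$ inside the relative commutant.
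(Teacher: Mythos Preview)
Your proof is correct and follows essentially the same route as the paper: apply Theorem~\ref{cor:biexact} (which packages Theorem~\ref{thm:biexact dichotomy} and Theorem~\ref{prop:bootstrap}) to obtain the relatively amenable pieces together with the properly proximal complement, and then invoke Lemma~\ref{Ioana} to convert relative amenability into intertwining or amenability. The only cosmetic difference is in how the $p_i$ are produced: the paper builds them explicitly as $p_i=f_i-p_i'$, subtracting off the maximal amenable summand of each $Af_i$, whereas you take abstract maximal projections. One small remark: since $Aq\prec_M M_i$ is upward-monotone in $q\in\mathcal Z(A'\cap M)$ (any witnessing corner for $Aq$ is also a corner of $Aq'$ when $q\leq q'$), your ``maximal $p_i$ with $Ap_i\prec_M M_i$'' is in fact either $0$ or $1$, so the Zorn step there is vacuous---this does not affect correctness, but it is worth being aware of.
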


\begin{proof}[Proof of Theorem~\ref{thm:subalg in free product}]
First note that the free products of the exact groups $\Gamma_i$ is  biexact relative to $\{\Gamma_1,\Gamma_2\}$  \cite[Proposition 15.3.12]{BO08} and $[e_{M_1}, e_{M_2}]=0$.
Then by Theorem~\ref{thm:biexact dichotomy}, we may take $f_1$ and $f_2$ from Theorem~\ref{prop:bootstrap} and let $p_i'\in \mathcal Z(Af_i)$ be the maximal projection such that $Ap_i'$ is amenable for each $i=1,2$.
Set $p_i=f_i-p_i'$ for $i=1$ and $2$, and $p_3=p_1'+ p_2'$ and the rest follows from Lemma~\ref{Ioana}.
\end{proof}

\begin{proof}[Proof of Corollary~\ref{cor:inner amen absorption}]
Since $A\subset M$ has no properly proximal direct summand, it follows from Theorem~\ref{prop:bootstrap} and Theorem~\ref{thm:biexact dichotomy} that there exists central projections $f_1$ and $f_2$ in $\cZ(A'\cap M)$ such that $Af_i$ is amenable relative to $M_i$ inside $M$ for each $i$, and $f_1+ f_2=1$.

If $Af_2$ is not amenable, then by Lemma \ref{Ioana} we have that $Af_2\prec_M M_2$.
However, since $A\cap M_1$ is diffuse, we may pick a sequence of trace zero unitaries $\{u_n\}$ in $A\cap M_1$ converging to 0.
One then checks that $\|E_{M_2}(x u_nf_2 y)\|_2\to 0$ for any $x$, $y\in M$, which is a contradiction. 
Therefore $A$ is amenable relative to $M_1$ inside $M$. And then it follows from Theorem~\ref{boutonnet houdayer theorem} that $A\subset M_1$.
\end{proof}

\begin{proof}[Proof of Corollary \ref{Kurosh}]
Note that in the case of $A=L\Gamma_1$, we have $\cZ(A'\cap M)=\C$ and hence Theorem~\ref{thm:subalg in free product} implies that
	either $L\Gamma_1\prec_M L\Lambda_1$ or $L\Gamma_1\prec_M L\Lambda_2\ast \cdots \ast L\Lambda_m$.
The same argument as in \cite[Corollary 8.1]{Drimbe1} deduces the desired result.
\end{proof}



\bibliographystyle{amsalpha}
\bibliography{ref}

\providecommand{\bysame}{\leavevmode\hbox to3em{\hrulefill}\thinspace}
\providecommand{\MR}{\relax\ifhmode\unskip\space\fi MR }
\providecommand{\MRhref}[2]{%
  \href{http://www.ams.org/mathscinet-getitem?mr=#1}{#2}
}
\providecommand{\href}[2]{#2}
\begin{thebibliography}{CFRW10}

\bibitem[BC15]{BC15}
R\'{e}mi Boutonnet and Alessandro Carderi, \emph{Maximal amenable von {N}eumann
  subalgebras arising from maximal amenable subgroups}, Geom. Funct. Anal.
  \textbf{25} (2015), no.~6, 1688--1705. \MR{3432155}

\bibitem[BC17]{boutonnetcarderimaxamen}
R{\'e}mi Boutonnet and Alessandro Carderi, \emph{Maximal amenable subalgebras
  of von neumann algebras associated with hyperbolic groups}, Mathematische
  Annalen \textbf{367} (2017), no.~3, 1199--1216.

\bibitem[BEW19]{BEW19}
Alcides Buss, Siegfried Echterhoff, and Rufus Willett, \emph{Injectivity,
  crossed products, and amenable group actions}, 2019, arXiv:1904.06771.

\bibitem[BH18]{BH18}
R\'{e}mi Boutonnet and Cyril Houdayer, \emph{Amenable absorption in amalgamated
  free product von {N}eumann algebras}, Kyoto J. Math. \textbf{58} (2018),
  no.~3, 583--593. \MR{3843391}

\bibitem[BIP18]{BIP18}
R\'emi Boutonnet, Adrian Ioana, and Jesse Peterson, \emph{Properly proximal
  groups and their von {N}eumann algebras}, 2018, arXiv:1809.01881.

\bibitem[Bla06]{Bl06}
B.~Blackadar, \emph{Operator algebras}, Encyclopaedia of Mathematical Sciences,
  vol. 122, Springer-Verlag, Berlin, 2006, Theory of $C^*$-algebras and von
  Neumann algebras, Operator Algebras and Non-commutative Geometry, III.

\bibitem[BO08]{BO08}
Nathanial~P. Brown and Narutaka Ozawa, \emph{{$C^*$}-algebras and
  finite-dimensional approximations}, Graduate Studies in Mathematics, vol.~88,
  American Mathematical Society, Providence, RI, 2008. \MR{2391387}

\bibitem[BW16]{1c}
Arnaud Brothier and Chenxu Wen, \emph{The cup subalgebra has the absorbing
  amenability property}, Internat. J. Math. \textbf{27} (2016), no.~2, 1650013,
  6. \MR{3464393}

\bibitem[CFRW10]{radialmasa}
Jan Cameron, Junsheng Fang, Mohan Ravichandran, and Stuart White, \emph{The
  radial masa in a free group factor is maximal injective}, J. Lond. Math. Soc.
  (2) \textbf{82} (2010), no.~3, 787--809. \MR{2739068}

\bibitem[CH10]{ChHou}
Ionut Chifan and Cyril Houdayer, \emph{Bass-{S}erre rigidity results in von
  {N}eumann algebras}, Duke Math. J. \textbf{153} (2010), no.~1, 23--54.
  \MR{2641939}

\bibitem[CS04]{ceccherini}
Tullio Ceccherini-Silberstein, \emph{On subfactors with a unitary orthonormal
  basis}, Sovrem. Mat. Prilozh. (2004), no.~22, Algebra i Geom., 102--125.
  \MR{2462073}

\bibitem[Din22]{Din22}
Changying Ding, \emph{First $\ell^2$-betti number and proper proximality},
  2022, In preparation.

\bibitem[DKE22]{DKE21}
Changying Ding and Srivatsav Kunnawalkam~Elayavalli, \emph{Proper proximality
  for various families of groups}, 2022, arXiv: 2107.02917.

\bibitem[DKEP22]{DKEP21}
Changying Ding, Srivatsav Kunnawalkam~Elayavalli, and Jesse Peterson,
  \emph{Properly proximal von {N}eumann algebras}, 2022, arXiv:
  https://arxiv.org/abs/2204.00517.

\bibitem[DP22]{DP22}
Changying Ding and Jesse Peterson, \emph{Biexact von {N}eumann algebras}, 2022,
  In preparation.

\bibitem[Dri22]{Drimbe1}
Daniel Drimbe, \emph{Measure equivalence rigidity via s-malleable
  deformations}, 2022.

\bibitem[ER88]{EfRu88}
Edward~G. Effros and Zhong-Jin Ruan, \emph{Representations of operator
  bimodules and their applications}, J. Operator Theory \textbf{19} (1988),
  no.~1, 137--158.

\bibitem[HJNS21]{hayesetal}
Ben Hayes, David Jekel, Brent Nelson, and Thomas Sinclair, \emph{A random
  matrix approach to absorption in free products}, Int. Math. Res. Not. IMRN
  (2021), no.~3, 1919--1979. \MR{4206601}

\bibitem[Hou14]{houdayergammaabsorption}
Cyril Houdayer, \emph{Gamma stability in free product von neumann algebras},
  Communications in Mathematical Physics \textbf{336} (2014).

\bibitem[HU16]{Houdayer_2016}
Cyril Houdayer and Yoshimichi Ueda, \emph{Rigidity of free product von~neumann
  algebras}, Compositio Mathematica \textbf{152} (2016), no.~12, 2461--2492.

\bibitem[Ioa15]{Ioa15}
Adrian Ioana, \emph{Cartan subalgebras of amalgamated free product {${\rm
  II}_1$} factors}, Ann. Sci. \'{E}c. Norm. Sup\'{e}r. (4) \textbf{48} (2015),
  no.~1, 71--130, With an appendix by Ioana and Stefaan Vaes. \MR{3335839}

\bibitem[Ioa18]{Io18}
\bysame, \emph{Rigidity for von neumann algebras}, Proceedings of the
  {I}nternational {C}ongress of {M}athematicians. {V}olume {II}, 2018,
  pp.~1635--1668.

\bibitem[IPP08]{IPP08}
Adrian Ioana, Jesse Peterson, and Sorin Popa, \emph{Amalgamated free products
  of weakly rigid factors and calculation of their symmetry groups}, Acta Math.
  \textbf{200} (2008), no.~1, 85--153. \MR{2386109}

\bibitem[IPR19]{IsPeRu19}
Ishan Ishan, Jesse Peterson, and Lauren Ruth, \emph{Von {Neumann} equivalence
  and properly proximal groups}, arXiv:1910.08682, 2019.

\bibitem[Jol12]{jolissaint}
Paul Jolissaint, \emph{Examples of mixing subalgebras of von {N}eumann algebras
  and their normalizers}, Bull. Belg. Math. Soc. Simon Stevin \textbf{19}
  (2012), no.~3, 399--413. \MR{3027351}

\bibitem[Mag97]{Ma97}
Bojan Magajna, \emph{Strong operator modules and the {H}aagerup tensor
  product}, Proc. London Math. Soc. (3) \textbf{74} (1997), no.~1, 201--240.

\bibitem[Mag98]{Ma98}
\bysame, \emph{A topology for operator modules over {$W^*$}-algebras}, J.
  Funct. Anal. \textbf{154} (1998), no.~1, 17--41.

\bibitem[Mag05]{Ma05}
\bysame, \emph{Duality and normal parts of operator modules}, J. Funct. Anal.
  \textbf{219} (2005), no.~2, 306--339.

\bibitem[OP10]{OzPo10a}
Narutaka Ozawa and Sorin Popa, \emph{On a class of {${\rm II}_1$} factors with
  at most one {C}artan subalgebra}, Ann. of Math. (2) \textbf{172} (2010),
  no.~1, 713--749. \MR{2680430}

\bibitem[Osi06]{osin}
Denis~V. Osin, \emph{Relatively hyperbolic groups: intrinsic geometry,
  algebraic properties, and algorithmic problems}, Mem. Amer. Math. Soc.
  \textbf{179} (2006), no.~843, vi+100. \MR{2182268}

\bibitem[Oya22]{Koichi}
Koichi Oyakawa, \emph{Bi-exactness of relatively hyperbolic groups}, 2022.

\bibitem[Oza04]{Oza04}
Narutaka Ozawa, \emph{Solid von {N}eumann algebras}, Acta Math. \textbf{192}
  (2004), no.~1, 111--117. \MR{2079600}

\bibitem[Oza06]{Oza06}
\bysame, \emph{A {K}urosh-type theorem for type {$\rm II_1$} factors}, Int.
  Math. Res. Not. (2006), Art. ID 97560, 21. \MR{2211141}

\bibitem[Oza10a]{Oza10}
\bysame, \emph{A comment on free group factors}, Noncommutative harmonic
  analysis with applications to probability {II}, Banach Center Publ., vol.~89,
  Polish Acad. Sci. Inst. Math., Warsaw, 2010, pp.~241--245. \MR{2730894}

\bibitem[Oza10b]{ozawanote}
\bysame, \emph{A comment on free group factors}, Noncommutative harmonic
  analysis with applications to probability {II}, Banach Center Publ., vol.~89,
  Polish Acad. Sci. Inst. Math., Warsaw, 2010, pp.~241--245. \MR{2730894}

\bibitem[Pet09]{Jesse}
Jesse Peterson, \emph{{$L^2$}-rigidity in von {N}eumann algebras}, Invent.
  Math. \textbf{175} (2009), no.~2, 417--433. \MR{2470111}

\bibitem[Pop83]{Popa1983MaximalIS}
Sorin Popa, \emph{Maximal injective subalgebras in factors associated with free
  groups}, Advances in Mathematics \textbf{50} (1983), 27--48.

\bibitem[Pop06]{popainvent}
Sorin Popa, \emph{Strong rigidity of {$\rm II_1$} factors arising from
  malleable actions of {$w$}-rigid groups. {I}}, Invent. Math. \textbf{165}
  (2006), no.~2, 369--408. \MR{2231961}

\bibitem[PP86]{PPbasis}
Mihai Pimsner and Sorin Popa, \emph{Entropy and index for subfactors}, Ann.
  Sci. \'{E}cole Norm. Sup. (4) \textbf{19} (1986), no.~1, 57--106. \MR{860811}

\bibitem[PSW18]{SKC}
Sandeepan Parekh, Koichi Shimada, and Chenxu Wen, \emph{Maximal amenability of
  the generator subalgebra in {$q$}-{G}aussian von {N}eumann algebras}, J.
  Operator Theory \textbf{80} (2018), no.~1, 125--152. \MR{3835452}

\bibitem[Ser80]{Ser80}
Jean-Pierre Serre, \emph{Trees}, Springer-Verlag, Berlin-New York, 1980,
  Translated from the French by John Stillwell. \MR{607504}

\bibitem[Vae12]{vaesinneramenable}
Stefaan Vaes, \emph{An inner amenable group whose von {N}eumann algebra does
  not have property {G}amma}, Acta Math. \textbf{208} (2012), no.~2, 389--394.
  \MR{2931384}

\bibitem[Wen16]{wen}
Chenxu Wen, \emph{Maximal amenability and disjointness for the radial masa}, J.
  Funct. Anal. \textbf{270} (2016), no.~2, 787--801. \MR{3425903}

\end{thebibliography}

\end{document}